\newtheorem{theorem}{Theorem}
\newtheorem{lemma}{Lemma}
\newtheorem{corollary}{Corollary}
\newtheorem{remark}{Remark}  
\def\ScaleIfNeeded{%
\ifdim\Gin@nat@width>\linewidth \linewidth \else \Gin@nat@width
\fi } \makeatother
\begin{document}

\title{\Huge{Performance Analysis of Double Reconfigurable Intelligent Surfaces Assisted NOMA Networks  }}

\author{Xuehua\ Li,~\IEEEmembership{Member,~IEEE}, Xuanhao Lian, Xinwei~Yue,~\IEEEmembership{Senior Member,~IEEE}, Zhiping Lu, Chongwen Huang,~\IEEEmembership{Member,~IEEE}, Tianwei Hou, ~\IEEEmembership{Member,~IEEE}

\thanks{This work was supported in part by National Natural Science Foundation of China (Grant 62071052) and in part by Beijing Natural Science Foundation (Grant L222004). \emph{(Corresponding author: Xinwei Yue, Tianwei Hou.)}}
\thanks{X. Li, X. Lian, and X. Yue are with the Key Laboratory of Information and Communication Systems, Ministry of Information Industry and also with the Key Laboratory of Modern Measurement $\&$ Control Technology, Ministry of Education, Beijing Information Science and Technology University, Beijing 100101, China (email: \{lixuehua and xinwei.yue\}@bistu.edu.cn, lianxuanhao@gmail.com.)}
\thanks{Z. Lu is with the School of Information and Communication Engineering, Beijing University of Posts and Telecommunications, Beijing 100876, China and aslo with State Key Laboratory of Wireless Mobile Communications (CICT), Beijing 100191, China (luzp@bict.com).}
\thanks{C. Huang is with the College of Information Science and Electronic Engineering, Zhejiang University, Hangzhou, 310027, China (email: chongwenhuang@zju.edu.cn).}
\thanks{T. Hou is with the School of Electronic and Information Engineering, Beijing Jiaotong University, Beijing 100044, China, and also with the Institute for Digital Communications, Friedrich-Alexander Universit\"at Erlangen-N\"urnberg (FAU), 91054 Erlangen, Germany (email: twhou@bjtu.edu.cn).}

}


\maketitle

\begin{abstract}
This paper introduces double cascaded reconfigurable intelligent surfaces (RISs) to non-orthogonal multiple access (NOMA) networks over cascaded Rician fading and Nakagami-$m$ fading channels, where two kinds of passive RIS (PRIS) and active RIS (ARIS) are taken into consideration, called PRIS-ARIS-NOMA networks.
Additionally, new closed-form and asymptotic expressions for outage probability and ergodic data rate of two non-orthogonal users are derived with the imperfect/perfect successive interference cancellation schemes.
The scenario is modelled around two non-orthogonal users and focuses on analyzing their communication characteristics.
Based on the approximate results, the diversity orders and ergodic data rate slopes of two users are obtained in the high signal-to-noise ratios.
In addition, the system throughput of PRIS-ARIS-NOMA in delay-limited mode and delay-tolerant mode are discussed according to the outage probability and ergodic data rate.
The simulation results verify the correctness of the formulas and yields the following insights: 1) The outage behaviors of PRIS-ARIS-NOMA outperforms than that of PRIS-ARIS assisted orthogonal multiple access (OMA); 2)Use of PRIS-ARIS-NOMA is better than use of PRIS-ARIS-OMA in small transmit power threshold scenarios 3) By increasing the number of reflecting elements of RISs, the PRIS-ARIS-NOMA is able to achieve the enhanced outage performance; and 4) The PRIS-ARIS-NOMA has the higher ergodic data rate and system throughput than double PRISs-NOMA.
\end{abstract}
\begin{keywords}
{D}ouble reconfigurable intelligent surfaces, non-orthogonal multiple access, cascaded channels, outage probability.
\end{keywords}
\section{Introduction}
With the recent advancements in science and technology, the focus of today's technological development has shifted towards achieving a higher quality of life.
The exponential growth in productivity has emerged as a guiding beacon, leading the way in technological progress.
Compared with the fifth-generation (5G) mobile communications, the sixth-generation (6G) is pinned with more hope for rapid development\cite{A_survey_of_5G}.
It aims to build super wireless broadband, super large-scale connectivity and extremely reliable communication capabilities \cite{Realizing_6G}.
And a prominent approach in current efforts to explore innovative, efficient and resource-friendly wireless network solutions for the future is reconfigurable intelligent surface (RIS) \cite{RISDAI,RISBasar,RISPan}.
The International Mobile Telecommunications 2030 has released the inaugural research report on RIS, signifying the formal inclusion of RIS within the realm of future research.

RIS is a relay-type planar array with programmable elements and serves as an artificial surface with the ability to adjust its electromagnetic properties \cite{Yuanwei_RIS2021}.
Each reflecting element is independent to adjust its signal reflecting amplitude and phase in response to the prevailing environment \cite{Beyond_Massive_MIMO}.
Currently, researches and discussions on RIS are in full swing, with both questions being raised and significant accomplishments being made \cite{Q_and_I}.
The authors of \cite{RIS_Channel_Estimation} studied the channel estimation of RIS and trained the signals behavior function by using various channel models.
And researchers in \cite{Communication_Models} discussed the study of RIS wireless communication performance optimization from a physical electromagnetic perspective.
On emphasis of the efficiency, the authors of \cite{How_many} explored the relationship between reflecting elements number and total rate in RIS-assisted multi-user networks for a balance solution.
Additionally, the impact of spatial channel correlation on the outage probability of RIS-assisted networks was investigated in \cite{OPRISSpatially}.
Moreover, a vehicle networking model was enhanced by integrating RIS in \cite{RISVehicular} and employing a selection concept scheme to determine the optimal RIS transmission.
For the line-of-sight transmission link studied by \cite{RISTAO}, the outage probability and ergodic capacity of RIS-aided networks were analyzed over Rician fading channels.

Compared to traditional RIS-assisted communication, various RIS methods have emerged including the active RIS (ARIS), which offers a distinct approach.
ARIS is an intelligent reflecting surface technology that offers a higher degree of flexibility and intelligence than traditional Passive RIS.
The main difference between ARIS and PRIS is the active adjustment of the reflecting elements.
These reflecting elements are usually composed of devices with variable impedance or adjustable devices, such as smart antenna elements, liquid crystal modulators, and so on.
By adjusting the impedance or other parameters of these reflective units, ARIS can adjust the characteristics of the reflecting surface in real time to adapt to different communication needs and environmental conditions.
In \cite{ARIS_design}, the design of ARIS was proposed to overcome the double-fading problem in passive RIS (PRIS) link and displays better performance than PRIS.
Simultaneously in \cite{ARIS_design2}, an ARIS-assisted networks were applied in a single-input multiple-output system and thermal noise optimization was analyzed with detailed formulations.
The asymptotic performance of ARIS with a max-rate problem were studied in \cite{ARISdesign_Dai} over the proposition of a joint transmit beamforming scheme.
In \cite{ARISdesign_Dai2}, a sub-connected ARIS scheme was proposed for energy consumption where several elements controlled signals amplification independently through the same power amplifier.
Furthermore the authors in \cite{ARIS_You} evaluated the superiority and inferiority of ARIS versus PRIS in terms of uplink and downlink communication scenarios.
And the authors of \cite{Yue_ARIS} studied the ARIS assisted NOMA networks and analyzed the performance of users with hardware impairments.
In addition to ARIS, another perspective involves the utilization of multiple RISs for assisting communication.
The authors of \cite{Novel_Multiple_RIS} suggested a novel multiple RIS-assisted networks over Nakagami-$m$ fading channels and analyzed the performance with the help of Gaussian distribution.
For using relay in multi-RISs system, the authors of \cite{Ergodic_RIS} characterized the achievable rate with the help of a decode-and-forward relay over Nakagami-$m$ fading channels.
Moreover in \cite{You_mul}, the best RIS was selected from multiple ARISs and PRISs to maximize the achievable data rate.
The authors of \cite{DoubleZheng} applied double RISs into MIMO system, involving the cooperative passive beamforming design from the inter-RIS channels.
In addition to the two RISs models mentioned above, a simultaneous transmitting and reflecting RIS (STARS) has been proposed \cite{addnew3,addnew4}, which has transmissive properties on top of reflective, and can consume a certain amount of resources to transmit signals through it, which improves the flexibility of RIS deployment.

As a pivotal technology for 6G, non-orthogonal multiple access (NOMA) is highly regarded by researchers due to its capability to enhance spectrum efficiency and throughput, especially in complex scenarios with numerous concurrent links \cite{YuanweiNOMA,NOMA_IoT,NOMA_energy}.
NOMA has ability to support multiple users' information which is linearly superposed at the same physical resource over different power levels \cite{Application_of_NOMA}.
And  a successive interference cancellation (SIC) will be carried out at the receiver to peel off the desired signals \cite{NOMAtech}.
The authors of \cite{Yingjie2023RISNOMAPLS} investigated the secrecy performance of NOMA networks, where the secrecy outage probability expressions of multiple users are derived.
And in \cite{Yue_full_half_NOMA}, cooperative NOMA networks were proposed where the nearby user acts as a relay to assist the distant user in transmitting signals.
Similarly in \cite{Liu_PLSNOMA}, the authors applied the NOMA framework in the direction of physical layer security and present the process of analyzing the secrecy outage probability.
For the direction of random access in \cite{SemiNOMA}, a framework of semi-grant-free NOMA networks was proposed for improving multi-user detection performance.
Current theoretical work that combines PRIS with NOMA has also been quite well referenced.
Presented in \cite{Ding_1bit}, PRIS-NOMA networks were suggested to serve additional edge users and designed with a phase shifting matrix calculation of "on-off" control.
And in \cite{yue2021RISNOMA}, the performance of PRIS-NOMA networks was analyzed with 1-bit coding where different relay baselines are set up to show superiority of PRIS-NOMA networks.
In \cite{Yuanwei_NOMA_RIS} for multiple PRISs aided multiple users NOMA networks, the authors analyzed users outage performance in scenarios with and without direct links between users and the source.
There are similarities between the research directions of \cite{You_mul} and \cite{Yuanwei_NOMA_RIS}, but in \cite{You_mul} the focus is on the selection of ARIS and PRIS as well as the optimal path assignment, while \cite{Yuanwei_NOMA_RIS} focuses on the system performance of RIS-assisted NOMA communication with multiple parallel transmissions.
Additionally in \cite{Yuanwei_RIS_and_multi-cell_NOMA}, the PRIS assisted multi-cell NOMA networks were analyzed by introducing a pass loss expression and found out that PRIS could adjust the SIC order in NOMA.
And the authors of \cite{STARS_Yue} analyzed the performance of NOMA networks aided by STARS to improve the channel variance between users in NOMA networks.
The authors of \cite{addnew1} proposed a cognitive unmanned aerial vehicle (UAV) network with NOMA aided by STARS, where STARS was utilized to assist the transmission and reflection of signals from a secondary UAV.
And for using deep reinforcement learning in UAV aided RIS-NOMA networks in \cite{addnew2}, the authors set RISs on the UAVs for the purpose of reshaping the wireless transmission path.
\subsection{Motivations and Contributions}
Building upon the previous sections, the literature on single RIS analysis has become abundant, and the fundamental research on RIS-assisted communication is fairly comprehensive.
Compared with a single RIS communication system, a communication system using two RISs has wider coverage, higher communication capacity, stronger anti-noise capability and more flexible beamforming capability, which can further improve the performance and efficiency of the communication system \cite{reviewer5r1,reviewer5r2}.
Furthermore, we took a keen interest in multi-RIS and would like to increase research on multi-RIS assisted NOMA communication networks at this stage.
In the recent period we note ARIS, which can better resist the impact of double-fading on communication compared to traditional RIS.
The authors in \cite{ARIS_design2} proposed a novel ARIS approach, where each elements of ARIS is deployed with an active load for signals reflection and amplification.
Consequently for a given power budget ARIS networks, enhanced performance can be achieved by increasing the number of reflecting elements and signal amplification gains \cite{Pancunhua}.
To address the path loss and channel control, the authors presented multi-RIS networks in \cite{You_mul} where ARISs and PRISs were deployed between the source and users for assistance.
In \cite{double_RIS_UAV}, one of the scenarios in which the sender and receiver communicate by means of two RISs as well as a relay is of great interest to us and we would like to explore the suitability of the modelling of this scenario for NOMA.
To the best of our knowledge, there are no previous studies to investigate PRIS-ARIS assisted NOMA networks over cascaded Rician fading and Nakagami-$m$ fading channels.
Thus we composed this paper to consider the feasibility of this scenario and to analyze it, which was our purpose and motivation for creating it.
More specifically, we investigate the performance of a pair of non-orthogonal users, a nearby user $D_n$ and a distant user $D_m$, for PRIS-ARIS-NOMA in terms of outage probability and ergodic data rate.
And PRIS-ARIS-NOMA is an abbreviated version of the PRIS and ARIS assisted NOMA communication networks
Noted that both ARIS and PRIS are part of the RIS technologies, so the system does not count as a Hybrid system.
Additionally, the outage probability and ergodic data rate of the two users are derived in detail.
According to the aforementioned explanations, the primary contributions of this manuscript are summarized as follows:
\begin{enumerate}
  \item We derive the closed-form expressions of outage probability for $D_n$ with imperfect SIC (ipSIC)/perfect SIC (pSIC) and $D_m$.
      We further deduce the asymptotic outage probabilities of $D_n$ with ipSIC/pSIC and $D_m$ to calculate the diversity orders of users.
      From the derivation results, it can be found that the users'diversity orders is related to the number of reflecting elements of ARIS.
  \item We set PRIS-ARIS-OMA and double PRISs-NOMA scenarios as the benchmark for comparison, then compare the outage performance among PRIS-ARIS-NOMA, double PRISs-NOMA and PRIS-ARIS-OMA scenarios.
      We set PRIS-ARIS-OMA and double PRISs-NOMA scenarios as the benchmark for comparison,
      We confirm that the outage performance with ARIS on the user side is better than that with PRIS, and the outage performance of PRIS-ARIS-NOMA  is better than that of PRIS-ARIS-OMA.
  \item We derive the the closed-form expressions of ergodic data rate for $D_n$ with ipSIC/pSIC and $D_m$, as well as ergodic data rate slopes of users in high signal-to-noise ratio (SNR) region. It can be confirmed that the ergodic data rate of PRIS-ARIS-NOMA is higher than that of double PRISs-NOMA and PRIS-ARIS-OMA.
  \item We evaluate the system throughputs of PRIS-ARIS-NOMA/OMA and double PRISs-NOMA in delay-limited and delay-tolerant modes.
        In delay-limited mode, the system throughput of PRIS-ARIS-NOMA outperforms double PRISs-NOMA while that of double PRISs-NOMA is superior to PRIS-ARIS-OMA.
        In delay-tolerant mode, the system throughput of PRIS-ARIS-NOMA with pSIC outperforms that of PRIS-ARIS-NOMA with ipSIC.
\end{enumerate}
\subsection{Organizations and Notations}
The remainder of this paper is structured as follows.
In Section II, system model of PRIS-ARIS-NOMA is introduced.
In Section III, the outage behaviors of PRIS-ARIS-NOMA are analyzed meticulously.
In Section IV, the ergodic data rates of the two users are derived in detail.
In Section V, simulation results and analysis are presented to consolidate the conclusions summarized records in Section VI.

Notations in this paper describe mainly as follows: ${f_X}\left(  \cdot  \right)$ and ${F_X}\left(  \cdot  \right)$ are denote the the
probability density function (PDF) and the cumulative distribution function (CDF) of a random variable $X$, respectively; ${\mathbb{E[ \cdot ]}}$ denotes the expectation operator; $diag\left(  \cdot  \right)$ and ${\left(  \cdot  \right)^H}$ represent the diagonal matrix and conjugate-transpose operations, respectively.
\section{System Model}\label{System Model}
We consider double RISs, i.e., PRIS and ARIS assisted downlink NOMA communication scenarios, shown in Fig.1.
Compared with the multiuser NOMA scenario, the use of two-user NOMA model in this paper can more intuitively show the performance comparison between the nearby and distant users, and better analyze the fairness and superiority of the NOMA communication system compared with the orthogonal multiple access system in the case of large differences in the channel conditions compared with the multiuser scenario\footnote{In this paper we focus on two-user scenario and multi-user scenario will be analyzed in future studies.}.
Then we set the the nearby and distant user which are denoted as ${D_n}$ and ${D_m}$, respectively.
The user is equipped with a receiver and has direct access to its own signal.
The PRIS is deployed near the base station (BS) while the ARIS is arranged around the users.
An amplify-forward (AF) relay positioned between the PRIS and ARIS consolidates signals reflected from the PRIS and reliably transmits them to the ARIS.
And it can also be integrated onto drones or aerial platforms to facilitate longer-range signal transmission in practical applications \cite{double_RIS_UAV}.
Studies in \cite{R13} have shown that RIS and relay can be used together to extend the coverage and improve the performance of the networks with an appropriate deployment strategy.
Also the advantage of using AF relay over decode-forward (DF) relay is that it has a higher transmission rate and wider extension range with less complexity and is more suitable for multi-hop relaying networks to reduce the computational complexity \cite{R14}.
AF relay does not require a complex network of reflecting elements to regulate the propagation of signals, but only amplifies and forwards the signals, which makes the implementation of AF relay relatively simple and reduces the complexity, and also minimizes the difficulty of phase-shift calculations brought about by the RIS when mathematically modelling the communication model.
RIS can be adjusted to control the process of transmitting and receiving in real time to the maximum extent possible according to the environment and communication needs, so as to achieve the regulation and optimisation of wireless signals.
Therefore we chose to deploy RIS at the transmitting end and receiving end, and AF relay at the intermediate node.
Considering that deploying ARIS at the transmitting end may bring more noise interference problems, we choose to deploy PRIS at the transmitting end and ARIS at the receiving end.
Specifically, the ARIS and PRIS are equipped with $M$ and $N$ reflecting elements (REs), respectively.
In this paper we focus on analyzing the system performance of assisted communication via RISs in scenarios where the direct link is blocked, thus there is no direct communication link between BS and the users.
The BS and each users are equipped with only one single antenna\footnote{Multi-antenna scenario is a totally different scenario and will be exploited in the future work.}.
If BS is equipped with multiple antennas, the user's reachable rate will be increased by a multiple of the number of antennas from a single antenna, therefore it is modelled as a single antenna BS in this model to analyze users performance more intuitively.
In this model, two-RIS links are constructed with AF relay as the demarcation.
The first part channels of the link from BS to PRIS then to AF relay can be defined as ${{\mathbf{h}}_1} \in {\mathbb{C}^{N \times 1}}$ and ${{\mathbf{h}}_2} \in {\mathbb{C}^{N \times 1}}$, respectively.
The second part channels of the link from AF relay to ARIS then to users can be defined as ${{\mathbf{g}}_1} \in {\mathbb{C}^{M \times 1}}$ and ${{\mathbf{g}}_\varphi } \in {\mathbb{C}^{M \times 1}}$, respectively, where $\varphi  \in \left\{ {m,n} \right\}$.
Define the phase shifting matrix of ARIS as ${{\mathbf{\Theta }}_a} = diag\left( {\sqrt {\beta _1^a} {e^{j{\theta _1}}},\sqrt {\beta _2^a} {e^{j{\theta _2}}}, \ldots ,\sqrt {\beta _M^a} {e^{j{\theta _M}}}} \right)$,
and define the phase shifting matrix of PRIS as ${{\mathbf{\Theta }}_p} = diag\left( {\sqrt {\beta _1^p} {e^{j{\varphi _1}}},\sqrt {\beta _2^p} {e^{j{\varphi _2}}}, \ldots ,\sqrt {\beta _N^p} {e^{j{\varphi _N}}}} \right)$, where ${{\theta _m}}$ and ${{\varphi _n}}$ denote the $m$-th and $n$-th element of the ARIS and PRIS, respectively.
$\beta$ indicates the reflection amplification of the signals by RISs.
Compared to PRIS, because that ARIS is powered by an external power source, it can amplify and forward signals that could just be normally reflected by the PRIS.
For simplicity, all ${\beta _n^p}$ are assumed to be $\beta _n^p = 1$ while all $\beta _m^a$ are assumed to be ${\beta} = \beta _m^a > 1$.
Then the phase shifting matrix of ARIS and PRIS can be denoted as ${{\mathbf{\Theta }}_a} = \sqrt \beta  diag\left( {{e^{j{\theta _1}}},{e^{j{\theta _2}}}, \ldots ,{e^{j{\theta _M}}}} \right) = \sqrt \beta  {{\mathbf{\Phi }}_a}$, and ${{\mathbf{\Theta }}_p} = diag\left( {{e^{j{\varphi _1}}},{e^{j{\varphi _2}}}, \ldots ,{e^{j{\varphi _N}}}} \right) = {{\mathbf{\Phi }}_p}$, respectively.
\begin{figure}[t!]
    \begin{center}
        \includegraphics[width=3.4in,  height=1.8in]{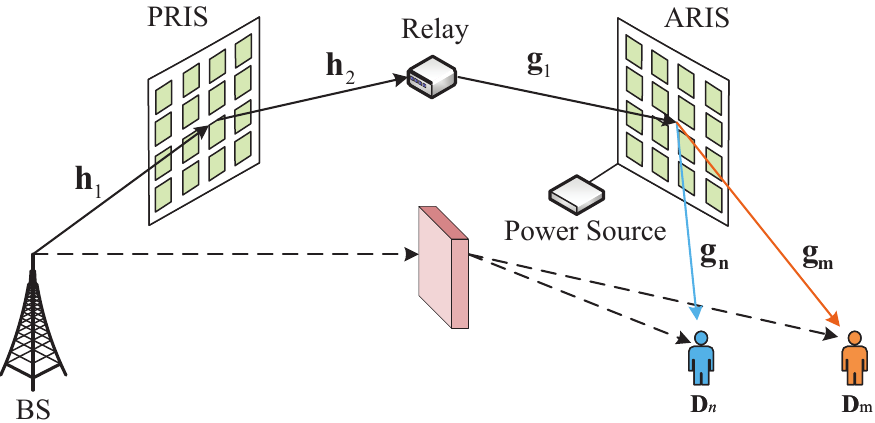}
        \caption{System model of PRIS-ARIS-NOMA.}
        \label{Fig. 1}
    \end{center}
\end{figure}
\subsection{Signal Model}
Although ARIS can use an external power source to combat the channel fading of signal propagation, it also leads to the amplification of noise signals on it, which cannot be idealized as in the analysis of a single PRIS model and the amplified noise from the transmitter on ARIS cannot be ignored in the analysis process.
Hence, the expression of the received signal when it passes through the whole channel link and then is received at the $\varphi$-th user is denoted as
\begin{align}\label{received signal expression}
  {y_\varphi } = \sqrt \beta  {\mathbf{g}}_\varphi ^H{{\mathbf{\Phi }}_a}&\left[ {{\mathbf{h}}_2^H{{\mathbf{\Phi \nonumber }}_p}{{\mathbf{h}}_1}\left( {\sqrt {{a_n}{P_s^{a}}} {x_n}} \right.} \right. \hfill \nonumber \\
  &\left. {\left. { + \sqrt {{a_m}{P_s}} {x_m}} \right)G{{\mathbf{g}}_1} + {{\mathbf{N}}_a}} \right] + {n_0},
\end{align}
where ${{\mathbf{\Phi }}_a}$ and ${{\mathbf{\Phi }}_p}$ represent the reflecting phase shift matrices of ARIS and PRIS, which have been given a detailed explanation and definition above.
${{\mathbf{N}}_a}\sim\mathcal{C}\mathcal{N}\left( {0,\sigma _a^2{{\mathbf{I}}_N}} \right)$ represents the thermal noise, and ${\mathbf{I}}_N \in {\mathbb{C}^{M \times 1}}$ is a unit column vector.
$a_n$ and $a_m$ are power allocation factor of $D_n$ and $D_m$, respectively, with $a_n+a_m=1$.
$P_s^{a}$ is the transmitting power of the BS for PRIS-ARIS-NOMA with $x_n$ and $x_m$ are normalized power signals.
For elaboration, $\mathbb{E}\left\{ {{{\left| {{x_n}} \right|}^2}} \right\} = \mathbb{E}\left\{ {{{\left| {{x_m}} \right|}^2}} \right\} = 1$.
Meanwhile, ${n_0}\sim\mathcal{C}\mathcal{N}(0,{\sigma ^2})$ is the additive white Gaussian noise (AWGN) at the receiving end, ${\sigma ^2}$ is the power of AWGN.
$G$ is the AF amplification gain and set $G$ to 1 for calculations.

Rice fading channels and Nakagami-$m$ fading channels are the more commonly used channel modelling approaches in performance analysis scenarios and are commonly used to analyze and model signal transmission performance in wireless communication systems.
The fading characteristics of the Rician fading channel consist mainly of fading from the dominant path, but also from the multipath component, and are therefore suitable for modelling the transmission channel between RISs in this model.
The Nakagami-$m$ fading channel model is applicable to a variety of different wireless environments, including indoor, outdoor, urban and suburban areas, and is therefore well suited for the transmitting link as well as the receiving link in this model.
In order to better fit the actual channel characteristics, we considered the channels from BS to PRIS ,then from ARIS to users as Nakagami-$m$ fading channels.
Assuming the channels between the two RISs are Rician fading channels.
Based on the above definition of the communication channels, we can denote the channel coefficients from BS to PRIS and PRIS to AF as ${{\bf{h}}_1} = \sqrt {\eta d_{h1}^{ - \alpha }} \left[ {h_1^1,h_2^1 \ldots ,h_N^1} \right]^H$ and ${{\bf{h}}_2} = \sqrt {\eta d_{h2}^{ - \alpha }} \left[ {h_1^2,h_2^2 \ldots ,h_N^2} \right]^2$, respectively.
And meanwhile denote the channel coefficients from AF to ARIS and ARIS to the $\varphi$-user as ${{\bf{g}}_1} = \sqrt {\eta d_{g1}^{ - \alpha }} {\left[ {g_1^1,g_2^1 \ldots ,g_M^1} \right]^H}$ and ${{\bf{g}}_\varphi } = \sqrt {\eta d_{g\varphi }^{ - \alpha }} {\left[ {g_1^\varphi ,g_2^\varphi  \ldots ,g_M^\varphi } \right]^H}$, respectively.
$\eta $ expresses the the pass loss while $\alpha$ expresses the path loss exponent.
$d_{h1}$,$d_{h2}$,$d_{g1}$,$d_{g\varphi}$ respectively denote the distance of each channel segment.
$h_n^1$ and $g_m^\varphi$ follows the Nakagami-$m$ fading with shape parameter $m_{na}$ and scale parameter $\Omega_{na}$.
$h_n^2 = \sqrt {\frac{\kappa }{{\kappa  + 1}}}  + \sqrt {\frac{1}{{\kappa  + 1}}} \widehat {h_n^2}$ and $g_m^1 = \sqrt {\frac{\kappa }{{\kappa  + 1}}}  + \sqrt {\frac{1}{{\kappa  + 1}}} \widehat {g_m^1}$, where $\widehat {g_m^1}$ and $\widehat {h_n^2}\sim{\mathcal{C}}{\mathcal{N}}\left( {0,1} \right)$.
$\kappa$ is the Rician factor, which represents the ratio of the LoS component power to the scattered component power.
In order to make the analysis more accurate, all of the complex channel coefficients random variables which are from each part of channels are independent, and each part of channels is assumed to follow independent block fading, either.
It is to be noted that the normalized signals power is taken in the following calculation.

On the basis of the NOMA protocol, the transmitting end sends the superposed signal to each pair of non-orthogonal users, and the receiving end starts SIC procedure for decoding the signals.
The received signal-to-interference-plus-noise ratio (SINR) at $D_n$ to detected $D_m$'s information $x_m$ can be denoted by
\begin{align}\label{SINRND}
\gamma _{{D_n}}^{{D_m}} = \frac{{\beta {P_s^{a}}{{\left| {{\bf{h}}_2^H{{\bf{\Phi }}_p}{{\bf{h}}_1}} \right|}^2}{{\left| {{\bf{g}}_n^H{{\bf{\Phi }}_a}{{\bf{g}}_1}} \right|}^2}{a_m}}}{{\beta {P_s^{a}}{{\left| {{\bf{h}}_2^H{{\bf{\Phi }}_p}{{\bf{h}}_1}} \right|}^2}{{\left| {{\bf{g}}_n^H{{\bf{\Phi }}_a}{{\bf{g}}_1}} \right|}^2}{a_n} + \beta \sigma _a^2{{\left\| {{\bf{g}}_n^H{{\bf{\Phi }}_a}} \right\|}^2} + {\sigma ^2}}}.
\end{align}

After $D_n$ decoded and subtracted $D_m$'s signals $x_m$, it would next decode its own signals $x_n$. The received SINR at $D_n$ to detect its own information $x_n$ can be denoted by
\begin{align}\label{SINRN}
{\gamma _{{D_n}}} = \frac{{\beta {P_s^{a}}{{\left| {{\bf{h}}_2^H{{\bf{\Phi }}_p}{{\bf{h}}_1}} \right|}^2}{{\left| {{\bf{g}}_n^H{{\bf{\Phi }}_a}{{\bf{g}}_1}} \right|}^2}{a_n}}}{{\varpi {P_s^{a}}{{\left| {{h_{RI}}} \right|}^2} + \beta \sigma _a^2{{\left\| {{\bf{g}}_n^H{{\bf{\Phi }}_a}} \right\|}^2} + {\sigma ^2}}},
\end{align}
where $\varpi  = {\text{0 or 1}}$ represents the pSIC and ipSIC operation during the SIC process, respectively. ${h_{RI}}\sim\mathcal{C}\mathcal{N}\left( {0,{\Omega _{RI}}} \right)$ represents the corresponding complex coefficient of the residual interference during the ipSIC operation.
Noted that the generation of ipSIC is mainly due to the imperfect design of the SIC receiver as well as hardware errors, whereas in this paper we use the mathematical modelling of stochastic processes to model the mathematical properties of the whole channel, so that the CSI case received by the users can be assumed to be perfect\footnote{Imperfect CSI estimation in RIS networks is not considered in this paper and is the focus of our future researches.}.

When it comes to $D_m$'s demodulation part, the information belonged to $D_n$ is treated as interference during the SIC process. Therefore, the received SINR at $D_m$ to detected its own information $x_m$ can be denoted by
\begin{align}\label{SINRD}
{\gamma _{{D_m}}} = \frac{{\beta {P_s^{a}}{{\left| {{\bf{h}}_2^H{{\bf{\Phi }}_p}{{\bf{h}}_1}} \right|}^2}{{\left| {{\bf{g}}_m^H{{\bf{\Phi }}_a}{{\bf{g}}_1}} \right|}^2}{a_m}}}{{\beta {P_s^{a}}{{\left| {{\bf{h}}_2^H{{\bf{\Phi }}_p}{{\bf{h}}_1}} \right|}^2}{{\left| {{\bf{g}}_m^H{{\bf{\Phi }}_a}{{\bf{g}}_1}} \right|}^2}{a_n} + \beta \sigma _a^2{{\left\| {{\bf{g}}_m^H{{\bf{\Phi }}_a}} \right\|}^2} + {\sigma ^2}}}.
\end{align}

In order to provide a comparative basis for one of the experimental results, OMA scheme was presented as a baseline for comparison with NOMA scheme. The SNR at the nearby user $D_n$ and distant user $D_m$ under OMA scheme can be given by
\begin{align}\label{SNROMAN}
\gamma _{{D_n}}^{OMA} = \frac{{\beta {P_s}{a_n}{{\left| {{\bf{h}}_2^H{{\bf{\Phi }}_p}{{\bf{h}}_1}} \right|}^2}{{\left| {{\bf{g}}_n^H{{\bf{\Phi }}_a}{{\bf{g}}_1}} \right|}^2}}}{{\beta \sigma _a^2{{\left\| {{\bf{g}}_n^H{{\bf{\Phi }}_a}} \right\|}^2} + {\sigma ^2}}}
\end{align}
and
\begin{align}\label{SNROMAD}
\gamma _{{D_m}}^{OMA} = \frac{{\beta {P_s}{a_m}{{\left| {{\bf{h}}_2^H{{\bf{\Phi }}_p}{{\bf{h}}_1}} \right|}^2}{{\left| {{\bf{g}}_m^H{{\bf{\Phi }}_a}{{\bf{g}}_1}} \right|}^2}}}{{\beta \sigma _a^2{{\left\| {{\bf{g}}_m^H{{\bf{\Phi }}_a}} \right\|}^2} + {\sigma ^2}}},
\end{align}
respectively.
\subsection{Channel Statistical Properties}
In order to further improve the performance the networks, coherent phase shifting scheme has been adopted to optimize the phase shifting process \cite{Ding_phase_shifting}.
In a coherent phase shifting scheme, the phase shifts of each reflecting and transmitting element are synchronized with the phases of their respective incoming and outgoing fading channels.
This coherence results in superior analytical performance compared to the random phase shifting scheme.
It can simulate a more idealized and better channel state, and can reflect the system characteristics more intuitively and conveniently.
In coherent phase shifting scheme, when the distance is normalized, we can define the cascade channel gain ${\left| {{\bf{h}}_2^H{\bf{\Theta }}{{\bf{h}}_1}} \right|^2}$ as ${\zeta _h} = {\left| {\sum\nolimits_{n = 1}^N {\left| {h_n^1h_n^2} \right|} } \right|^2}$, and define ${\left| {{\bf{g}}_\varphi ^H{\bf{\Theta }}{{\bf{g}}_1}} \right|^2}$ as ${\zeta _g} = {\left| {\sum\nolimits_{m = 1}^M {\left| {g_m^1g_m^\varphi } \right|} } \right|^2}$.
Let ${X_n} = \left| {h_n^1h_n^2} \right|$ and $X_m^\varphi  = \left| {g_m^1g_m^\varphi } \right|$, thus we can get the PDF for a single cascaded Rician fading and Nakagami-$m$ fading channel as
\begin{align}\label{Single Cascade Channel PDF}
  {f_{{X_n}}}\left( y \right) = {f_{{X_m^{\varphi}}}}\left( y \right) =& \sum\limits_{m = 0}^\infty  {\frac{{4{y^{m + {m_{na}}}}{\kappa ^m}{{\left[ {\frac{{\left( {\kappa  + 1} \right){m_{na}}}}{{\hat r_1^2{{\hat r}_2}}}} \right]}^{\frac{{1 + m + {m_{na}}}}{2}}}}}{{{e^\kappa }{{\left( {m!} \right)}^2}\Gamma \left( {{m_{na}}} \right)}}}  \hfill \nonumber \\
   &\times {K_{m - {m_{na}} + 1}}\left( {2y\sqrt {\frac{{\left( {\kappa  + 1} \right){m_{na}}}}{{\hat r_1^2{{\hat r}_2}}}} } \right),
\end{align}
where ${\hat r_i}$ is the root-mean square value of the received signal envelope ${R_i}$ \cite{kuandnudistribution},
$\Gamma ( \cdot )$ is the gamma function\cite[Eq. (8.310.1)]{table}. ${K_\phi }( \cdot )$ is the $\phi$th-order modified Bessel function of the second kind\cite[Eq. (8.432)]{table}.

Since all channel parameters are distributed independently, $X_n$ and $X_m$ have the same mean and variance, which can be given by
\begin{align}\label{Mean}
{e_\varphi } = \sqrt {\frac{\pi }{{4\left( {\kappa  + 1} \right)}}} \frac{{\Gamma \left( {{m_{na}} + \frac{1}{2}} \right)}}{{\Gamma \left( {{m_{na}}} \right)}}{\left( {\frac{1}{{{m_{na}}}}} \right)^{\frac{1}{2}}}{L_{\frac{1}{2}}}\left( { - \kappa } \right),
\end{align}
and
\begin{align}\label{Variance}
{d_\varphi } = 1 - \frac{{L_{\frac{1}{2}}^2\left( { - \kappa } \right)}}{{4\left( {\kappa  + 1} \right)}}\left[ {\frac{\pi }{m}{{\left( {\frac{{\Gamma \left( {m + \frac{1}{2}} \right)}}{{\Gamma \left( m \right)}}} \right)}^2}} \right],
\end{align}
respectively, where ${L_{\frac{1}{2}}}\left( x \right)$ is the Laguerre polynomial and ${L_{\frac{1}{2}}}\left( { - \kappa } \right) = {e^{ - \frac{\kappa }{2}}}\left[ {\left( {1 + \kappa } \right){I_0}\left( {\frac{\kappa }{2}} \right) + \kappa {I_1}\left( {\frac{\kappa }{2}} \right)} \right]$, where ${I_0}\left(  \cdot  \right)$ and ${I_1}\left(  \cdot  \right)$ are respectively the modified zeroth-order and first-order Bessel function of the first kind.

Then, with the help of Laguerre polynomial series\cite[Sec. 2.2.2]{Stochastic_Methods}, the approximate PDF of ${\zeta _h}$ and ${\zeta _g}$ can be given by
\begin{align}\label{zetahpdf}
{f_{{\zeta _h}}}\left( x \right) = \frac{{{x^{\frac{{{a_X} - 1}}{2}}}}}{{2{b^{{a_X} + 1}}\Gamma \left( {{a_X} + 1} \right)}}{e^{ - \frac{{\sqrt x }}{{{b_X}}}}},
\end{align}
and
\begin{align}\label{zetagpdf}
{f_{{\zeta _g}}}\left( y \right) = \frac{{{y^{\frac{{{a_Y} - 1}}{2}}}}}{{2{b^{{a_Y} + 1}}\Gamma \left( {{a_Y} + 1} \right)}}{e^{ - \frac{{\sqrt y }}{{{b_Y}}}}},
\end{align}
respectively, where ${a_X} = \frac{{Ne_\varphi ^2}}{{{d_\varphi }}} - 1$, ${a_Y} = \frac{{Me_\varphi ^2}}{{{d_\varphi }}} - 1$,${b_X} = {b_Y} = \frac{{{d_\varphi }}}{{{e_\varphi }}}$.
\section{Outage Probability}\label{Outage probability}
This section evaluates the performance of the PRIS-ARIS-NOMA in the perspective of outage probability.
In this section the outage probability expressions for $D_n$ with ipSIC/pSIC and $D_m$ are derived in detail.
And the outage probability expressions of $D_n$ with ipSIC/pSIC and $D_m$ at high SNR region are further derived based on the outage probability expressions.

\subsection{The Outage Probability of $D_n$}
According to the regulations of SIC process, the weak user needs to decode and discard the signals of the strong user, i.e., the signals of $D_m$, before decoding its own signals in the SIC process.
Therefore, the event that causes an outage to $D_n$ can be defined as:1) If $D_n$ cannot detect the signals $x_m$ from $D_m$, then an outage occurs at $D_n$.
2) If $D_n$ can detect and decode $D_m$'s signal $x_m$, but cannot detect and decode its own signal $x_n$, then an outage occurs at $D_n$.
Based on the above representations, the outage probability of $D_n$ in PRIS-ARIS-NOMA can be denoted as
\begin{align}\label{outage event Dn}
P_{{D_n}}^{out} = \Pr \left( {\gamma _{{D_n}}^{{D_m}} < {\gamma _{t{h_m}}}} \right) + \Pr \left( {\gamma _{{D_n}}^{{D_m}} > {\gamma _{t{h_m}}},{\gamma _{{D_n}}} < {\gamma _{t{h_n}}}} \right),
\end{align}
where ${\gamma _{t{h_n}}} = {2^{{R_n}}} - 1$ and ${\gamma _{t{h_m}}} = {2^{{R_m}}} - 1$ are respectively the target SNR threshold when detecting signals. And $R_n$ and $R_m$ are respectively the instantaneous target rate threshold for $D_n$ and $D_m$.
\begin{theorem} \label{theorem:1}
Under cascaded Rician fading and Nakagami-$m$ fading channels, the closed-form expression for outage probability of $D_n$ with ipSIC for PRIS-ARIS-NOMA can be expressed as
\begin{align}\label{OP for Dn with ipSIC}
  &P_{{D_n}}^{out,ipSIC} = \sum\limits_{i = 1}^I {\sum\limits_{u = 1}^U {\frac{{{H_i}{H_u}x_u^{{a_X}}}}{{\Gamma ({a_X} + 1)\Gamma ({a_Y} + 1)}}} }  \hfill \nonumber \\
   &\times \gamma \left( {{a_Y} + 1,\frac{{\sqrt {\overline {{\lambda _n}} {\gamma _{t{h_n}}}\left( {\varpi {P_s^{a}}{\Omega _{RI}}{x_i} + \beta \sigma _a^2M{\Omega _{na}} + {\sigma ^2}} \right)} }}{{\sqrt {\beta {P_s^{a}}{a_n}} {b_X}{b_Y}{x_u}}}} \right),
\end{align}
where $\varpi  = 1$, $\overline {{\lambda _n}}  = {\eta ^{ - 4}}d_{h1}^\alpha d_{h2}^\alpha d_{g1}^\alpha d_{gn}^\alpha $.
For bringing into Gauss-Laguerre quadrature, ${{x_u}}$ is the $u$-th zero point of Laguerre polynomial ${L_u}({x_u})$ and ${{x_i}}$ is the $i$-th zero point of Laguerre polynomial ${L_i}({x_i})$. ${{H_u}}$ is the $u$-th weight, and can be denoted by ${H_u} = \frac{{{{[(U + 1)!]}^2}}}{{{x_u}{{[{{L'}_{U + 1}}({x_u})]}^2}}},u = 0,1,...,U$. Similarly, ${{H_i}}$ get the same mathematical meaning as ${{H_u}}$ and can be denoted by ${H_i} = \frac{{{{[(I + 1)!]}^2}}}{{{x_i}{{[{{L'}_{I + 1}}({x_i})]}^2}}},i = 0,1,...,I$.
Specifically, ${L_\varphi}\left( x \right) = {e^x}\frac{{{d^\varphi}}}{{d{x^\varphi}}}\left( {{x^\varphi}{e^{ - x}}} \right)$ is $\varphi$-th order Laguerre polynomial and ${L_\varphi}^\prime \left( x \right)$ is the derivative of ${L_\varphi} \left( x \right)$.
${\gamma (\alpha ,x) = \int_0^x {{e^{ - t}}{t^{\alpha  - 1}}} dt}$ is the lower incomplete Gamma function\cite[Eq. (8.350.1)]{table}.
Particularly, formula is valid under the condition of ${{a_m} > {\gamma _{t{h_m}}}{a_n}}$, if ${{a_m} < {\gamma _{t{h_m}}}{a_n}}$, then the nearby user will always be in the outage state.

\begin{proof} See Appendix A.
\end{proof}
\end{theorem}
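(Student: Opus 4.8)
\emph{Sketch of proof.} The plan is to collapse the composite outage event \eqref{outage event Dn} into a single inequality on the product $\zeta_h\zeta_g$ of the two independent cascaded gains, and then to evaluate the resulting nested one-dimensional integrals by Gauss--Laguerre quadrature. First I would substitute \eqref{SINRND} and \eqref{SINRN} into \eqref{outage event Dn}, approximate the noise-norm factor $\left\|{\mathbf{g}}_n^H{\mathbf{\Phi}}_a\right\|^2$ by its mean $M\Omega_{na}$, and absorb the four path-loss constants so that $\left|{\mathbf{h}}_2^H{\mathbf{\Phi}}_p{\mathbf{h}}_1\right|^2\left|{\mathbf{g}}_n^H{\mathbf{\Phi}}_a{\mathbf{g}}_1\right|^2=\overline{\lambda_n}^{-1}\zeta_h\zeta_g$. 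Using the hypothesis $a_m>\gamma_{th_m}a_n$, the event $\{\gamma_{D_n}^{D_m}<\gamma_{th_m}\}$ becomes $\{\zeta_h\zeta_g<\tau_m\}$ for a deterministic $\tau_m$, and $\{\gamma_{D_n}<\gamma_{th_n}\}$ becomes $\{\zeta_h\zeta_g<\tau_n(|h_{RI}|^2)\}$ with $\tau_n(w)=\overline{\lambda_n}\gamma_{th_n}(\varpi P_s^a w+\beta\sigma_a^2 M\Omega_{na}+\sigma^2)/(\beta P_s^a a_n)$ and $\varpi=1$; adding the two terms of \eqref{outage event Dn} then combines them into $P_{D_n}^{out,ipSIC}=\Pr(\zeta_h\zeta_g<\tau_n(|h_{RI}|^2))$, the $x_m$-detection threshold being dominated in the operating regime.

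Next I would condition on $h_{RI}$ and on $\zeta_h=x$; since $\zeta_g$ is independent of $\zeta_h$, the conditional outage probability is $F_{\zeta_g}(\tau_n/x)$. I would obtain $F_{\zeta_g}$ from the PDF \eqref{zetagpdf}: the substitution $t=\sqrt{y}$ converts $\int_0^z f_{\zeta_g}(y)\,dy$ into a scaled lower-incomplete-gamma integral, giving $F_{\zeta_g}(z)=\gamma(a_Y+1,\sqrt{z}/b_Y)/\Gamma(a_Y+1)$. Averaging over $\zeta_h$ with \eqref{zetahpdf} leaves the single integral $\frac{1}{\Gamma(a_Y+1)}\int_0^\infty f_{\zeta_h}(x)\,\gamma\!\left(a_Y+1,\tfrac{\sqrt{\tau_n}}{b_Y\sqrt{x}}\right)dx$, and the change of variable $u=\sqrt{x}/b_X$ turns this into $\frac{1}{\Gamma(a_X+1)\Gamma(a_Y+1)}\int_0^\infty e^{-u}u^{a_X}\,\gamma\!\left(a_Y+1,\tfrac{\sqrt{\tau_n}}{b_X b_Y u}\right)du$, which is precisely the form $\int_0^\infty e^{-u}g(u)\,du$ handled by Gauss--Laguerre quadrature; replacing it by $\sum_{u=1}^U H_u g(x_u)$ brings in the weights $H_u$ and the Laguerre nodes $x_u$.

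Finally I would uncondition on the residual interference: for ipSIC, $|h_{RI}|^2$ is exponential with mean $\Omega_{RI}$, so I would multiply the previous expression by $\Omega_{RI}^{-1}e^{-w/\Omega_{RI}}$, integrate over $w\in(0,\infty)$, substitute $t=w/\Omega_{RI}$ to obtain another integral of the form $\int_0^\infty e^{-t}(\cdot)\,dt$, and apply Gauss--Laguerre a second time; this yields the outer sum over $i$ with weights $H_i$ and the substitution $w\mapsto\Omega_{RI}x_i$ inside $\tau_n$. Collecting the constants reproduces \eqref{OP for Dn with ipSIC}.

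I expect the main obstacle --- and the reason the stated expression is really an approximation --- to be the two nested averages: neither $\int_0^\infty f_{\zeta_h}(x)\,\gamma(a_Y+1,c/\sqrt{x})\,dx$ nor its subsequent expectation over $|h_{RI}|^2$ admits an elementary antiderivative, so both must be discharged by Gauss--Laguerre quadrature, with care taken to cast each integrand into exactly the $e^{-(\cdot)}(\cdot)$ shape the rule requires. A secondary delicacy is justifying the reduction of the two-term expression \eqref{outage event Dn} to the single condition $\gamma_{D_n}<\gamma_{th_n}$: this relies on $a_m>\gamma_{th_m}a_n$ together with the fact that the $\zeta_h\zeta_g$-threshold for decoding $x_m$ at $D_n$ does not exceed $\tau_n$ in the regime considered, so that the $D_m$-detection outage event is absorbed.
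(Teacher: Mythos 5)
Your proposal is correct and follows essentially the same route as the paper's Appendix A: collapsing the two-term outage event into a single inequality on $\zeta_h\zeta_g$ (valid when the $x_n$-detection threshold dominates, under $a_m>\gamma_{th_m}a_n$), computing the CDF of the product via the lower incomplete gamma function and a first Gauss--Laguerre quadrature, approximating $\left\|{\mathbf{g}}_n^H{\mathbf{\Phi}}_a\right\|^2$ by $M\Omega_{na}$, and then averaging over the exponentially distributed $|h_{RI}|^2$ with a second Gauss--Laguerre quadrature. The delicacies you flag (the non-elementary nested integrals and the absorption of the $x_m$-detection event) are exactly the points the paper handles the same way.
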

\begin{corollary} \label{corollary1}
When it comes to the special case of $\varpi  = 0$, the closed-formed expression for outage probability of $D_n$ with pSIC for PRIS-ARIS-NOMA can be expressed as
\begin{align}\label{OP for Dn with pSIC}
  P_{{D_n}}^{out,pSIC} =& \sum\limits_{u = 1}^U {\frac{{{H_u}x_u^{{a_X}}}}{{\Gamma ({a_X} + 1)\Gamma ({a_Y} + 1)}}}  \hfill \nonumber \\
   &\times \gamma \left( {{a_Y} + 1,\frac{{\sqrt {\overline {{\lambda _n}} {\gamma _{t{h_n}}}\left( {\beta \sigma _a^2M{\Omega _{na}} + {\sigma ^2}} \right)} }}{{\sqrt {\beta P_s^a{a_n}} {b_X}{b_Y}{x_u}}}} \right).
\end{align}
\end{corollary}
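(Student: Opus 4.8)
The plan is to start from (\ref{outage event Dn}), collapse its two probabilities into a single event on the cascaded gain $\zeta_h\zeta_g$, and then integrate the resulting product distribution twice against Gauss--Laguerre nodes at the two points where no elementary antiderivative is available.

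First I would reduce the SINRs. Under the coherent phase-shifting convention of Section~\ref{System Model}, $|\mathbf{h}_2^H\mathbf{\Phi}_p\mathbf{h}_1|^2=\eta^2 d_{h1}^{-\alpha}d_{h2}^{-\alpha}\zeta_h$ and $|\mathbf{g}_n^H\mathbf{\Phi}_a\mathbf{g}_1|^2=\eta^2 d_{g1}^{-\alpha}d_{gn}^{-\alpha}\zeta_g$, while the amplified-noise term $\beta\sigma_a^2\|\mathbf{g}_n^H\mathbf{\Phi}_a\|^2$ is replaced by its mean $\beta\sigma_a^2 M\Omega_{na}$ (since $\|\mathbf{g}_n^H\mathbf{\Phi}_a\|^2=\sum_{m=1}^M|g_m^n|^2$ and $\mathbb{E}|g_m^n|^2=\Omega_{na}$). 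Absorbing the path losses into $\overline{\lambda_n}=\eta^{-4}d_{h1}^{\alpha}d_{h2}^{\alpha}d_{g1}^{\alpha}d_{gn}^{\alpha}$ and clearing denominators in (\ref{SINRND})--(\ref{SINRN}), each inequality in (\ref{outage event Dn}) becomes ``$\zeta_h\zeta_g$ below a threshold''; the threshold from $\gamma_{D_n}^{D_m}<\gamma_{th_m}$ is positive iff $a_m>\gamma_{th_m}a_n$ (otherwise the coefficient $a_m-\gamma_{th_m}a_n$ is non-positive and $D_n$ is always in outage, the stated degenerate case). Combining the two terms of (\ref{outage event Dn}) and keeping the binding $\gamma_{th_n}$ constraint, the outage event reduces to $\{\zeta_h\zeta_g<\Lambda_n\}$ with $\Lambda_n=\overline{\lambda_n}\gamma_{th_n}(\varpi P_s^a|h_{RI}|^2+\beta\sigma_a^2 M\Omega_{na}+\sigma^2)/(\beta P_s^a a_n)$ and $\varpi=1$.

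Next, conditioning on $|h_{RI}|^2$, I would write the conditional outage probability as $\int_0^\infty F_{\zeta_g}(\Lambda_n/x)f_{\zeta_h}(x)\,dx$. From (\ref{zetagpdf}) the substitution $s=\sqrt t/b_Y$ gives $F_{\zeta_g}(y)=\gamma(a_Y+1,\sqrt y/b_Y)/\Gamma(a_Y+1)$, and from (\ref{zetahpdf}) the substitution $v=\sqrt x/b_X$ gives $f_{\zeta_h}(x)\,dx=v^{a_X}e^{-v}\,dv/\Gamma(a_X+1)$, so the conditional outage probability equals
\[
\int_0^\infty e^{-v}\,\frac{v^{a_X}}{\Gamma(a_X+1)\Gamma(a_Y+1)}\,\gamma\!\Big(a_Y+1,\frac{\sqrt{\Lambda_n}}{b_X b_Y v}\Big)\,dv .
\]
Since this integral is not elementary, I would apply the $U$-point Gauss--Laguerre rule $\int_0^\infty e^{-v}g(v)\,dv\approx\sum_{u=1}^U H_u g(x_u)$, which produces the inner sum of (\ref{OP for Dn with ipSIC}). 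Finally, since $h_{RI}\sim\mathcal{C}\mathcal{N}(0,\Omega_{RI})$ makes $|h_{RI}|^2$ exponential with mean $\Omega_{RI}$, the substitution $w=|h_{RI}|^2/\Omega_{RI}$ turns the remaining average into $\int_0^\infty e^{-w}(\cdot)\,dw$ in which $\Lambda_n$ depends on $w$ only through the term $P_s^a\Omega_{RI}w$; a second, $I$-point Gauss--Laguerre rule with nodes $x_i$ and weights $H_i$ then gives exactly (\ref{OP for Dn with ipSIC}). Corollary~\ref{corollary1} is the special case $\varpi=0$: the residual-interference term vanishes, $\Lambda_n$ is deterministic, the outer average is vacuous, and only the single sum over $u$ remains.

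The hard part will be the middle step: the product $\zeta_h\zeta_g$ of two stretched-exponential-type variables has no closed-form CDF, and the ipSIC residual adds a second non-elementary integral, so the answer can only be put in closed form via the two nested Gauss--Laguerre approximations rather than by direct integration. One also has to verify that the $\gamma_{th_m}$-event is dominated by the $\gamma_{th_n}$-event so that a single threshold $\Lambda_n$ is enough, which is precisely where the hypothesis $a_m>\gamma_{th_m}a_n$ is invoked.
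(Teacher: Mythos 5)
Your proposal follows essentially the same route as the paper's Appendix~A: reduce the outage event to a single threshold on $\zeta_h\zeta_g$ (under $a_m>\gamma_{th_m}a_n$), replace $\beta\sigma_a^2\|\mathbf{g}_n^H\mathbf{\Phi}_a\|^2$ by its mean $\beta\sigma_a^2M\Omega_{na}$, evaluate the product-distribution CDF via the substitutions yielding $\gamma(a_Y+1,\sqrt{y}/b_Y)$ and a Gauss--Laguerre sum over $u$, average over the exponential $|h_{RI}|^2$ with a second Gauss--Laguerre rule, and finally set $\varpi=0$ so the outer average collapses. The derivation is correct and matches the paper's argument step for step, with the substitutions made more explicit than in the paper's own write-up.
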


\subsection{The Outage Probability of $D_m$}
In NOMA scenario, for distant user during the SIC process, only its own signals need to be decoded and the signals of the nearby user are treated as interference.
Therefore, the event that causes an outage to $D_m$ can be defined as: If $D_m$ cannot detect the signals $x_m$ itself, then an outage occurs at $D_m$.
Based on the above representations, the outage probability of $D_m$ in PRIS-ARIS-NOMA can be denoted as
\begin{align}\label{outage event Dm}
P_{{D_m}}^{out} = \Pr \left( {{\gamma _{{D_m}}} < {\gamma _{t{h_m}}}} \right).
\end{align}
\begin{theorem} \label{theorem:2}
Under cascaded Rician fading and Nakagami-$m$ fading channels, the closed-form expression for outage probability of $D_m$ for PRIS-ARIS-NOMA can be expressed as
\begin{align}\label{OP for Dm}
  P_{{D_m}}^{out} =& \sum\limits_{u = 1}^U {\frac{{{H_u}x_u^{{a_X}}}}{{\Gamma ({a_X} + 1)\Gamma ({a_Y} + 1)}}}  \hfill \nonumber \\
   &\times \gamma \left( {{a_Y} + 1,\frac{{\sqrt {\overline {{\lambda _m}} {\gamma _{t{h_m}}}\left( {\beta \sigma _a^2M{\Omega _{na}} + {\sigma ^2}} \right)} }}{{\sqrt {\beta {P_s^{a}}\left( {{a_m} - {\gamma _{t{h_m}}}{a_n}} \right)} {b_X}{b_Y}{x_u}}}} \right),
\end{align}
where $\overline {{\lambda _m}}  = {\eta ^{ - 4}}d_{h1}^\alpha d_{h2}^\alpha d_{g1}^\alpha d_{gm}^\alpha $, ${{a_m} > {\gamma _{t{h_m}}}{a_n}}$.
\begin{proof}
By substituting \eqref{SINRD} into \eqref{outage event Dm}, the outage probability expression of $D_m$ can be further derived as \eqref{OP for Dm step1} at the top of the next page.
\begin{figure*}[!t]
\normalsize
\begin{align}\label{OP for Dm step1}
  P_{{D_m}}^{out} &= \Pr \left( {\frac{{\beta {P_s^{a}}{{\left| {{\mathbf{h}}_2^T{{\mathbf{\Phi }}_p}{{\mathbf{h}}_1}} \right|}^2}{{\left| {{\mathbf{g}}_m^T{{\mathbf{\Phi }}_a}{{\mathbf{g}}_1}} \right|}^2}{a_m}}}{{\beta {P_s^{a}}{{\left| {{\mathbf{h}}_2^T{{\mathbf{\Phi }}_p}{{\mathbf{h}}_1}} \right|}^2}{{\left| {{\mathbf{g}}_m^T{{\mathbf{\Phi }}_a}{{\mathbf{g}}_1}} \right|}^2}{a_n} + \beta \sigma _a^2{{\left\| {{\mathbf{g}}_n^T{{\mathbf{\Phi }}_a}} \right\|}^2} + {\sigma ^2}}} < {\gamma _{t{h_m}}}} \right).
\end{align}
\hrulefill \vspace*{0pt}
\end{figure*}
The remaining computational steps of the proof are similar to the proof steps in Appendix A.
The proof is completed.
\end{proof}
\end{theorem}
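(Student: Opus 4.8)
The plan is to mirror the argument of Appendix~A, now specialized to the single decoding event \eqref{outage event Dm} for $D_m$. Substituting \eqref{SINRD} (displayed in expanded form in \eqref{OP for Dm step1}) and recalling from Section~\ref{System Model} that the path-loss constants factor out of the cascaded gains, I would write $|\mathbf{h}_2^H\mathbf{\Phi}_p\mathbf{h}_1|^2\,|\mathbf{g}_m^H\mathbf{\Phi}_a\mathbf{g}_1|^2 = \zeta_h\zeta_g/\overline{\lambda_m}$, with $\zeta_h,\zeta_g$ the normalized cascaded gains whose approximate PDFs are \eqref{zetahpdf} and \eqref{zetagpdf}. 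As in Appendix~A, the accumulated ARIS thermal-noise gain $\|\mathbf{g}_m^H\mathbf{\Phi}_a\|^2=\|\mathbf{g}_m\|^2$ is replaced by its mean $M\Omega_{na}$, so the denominator of $\gamma_{D_m}$ becomes the deterministic quantity $\beta\sigma_a^2 M\Omega_{na}+\sigma^2$. Collecting terms, the event $\gamma_{D_m}<\gamma_{th_m}$ is equivalent — and here the stated condition $a_m>\gamma_{th_m}a_n$ is exactly what keeps the coefficient of $\zeta_h\zeta_g$ positive — to
\[
\zeta_h\zeta_g<\tau_m:=\frac{\overline{\lambda_m}\,\gamma_{th_m}\bigl(\beta\sigma_a^2 M\Omega_{na}+\sigma^2\bigr)}{\beta P_s^{a}\,(a_m-\gamma_{th_m}a_n)} .
\]

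Next I would compute $\Pr(\zeta_h\zeta_g<\tau_m)$ by conditioning on $\zeta_h$, giving $\int_0^\infty f_{\zeta_h}(x)\,F_{\zeta_g}(\tau_m/x)\,dx$. The CDF $F_{\zeta_g}$ is obtained by integrating \eqref{zetagpdf}: the substitution $v=\sqrt{y}/b_Y$ collapses the prefactors and yields $F_{\zeta_g}(z)=\gamma\!\left(a_Y+1,\sqrt{z}/b_Y\right)/\Gamma(a_Y+1)$. Inserting this together with $f_{\zeta_h}$ from \eqref{zetahpdf} and changing the outer variable to $u=\sqrt{x}/b_X$ (so $dx=2b_X^2 u\,du$ and, again, all prefactors cancel) reduces the outage probability to the single integral
\[
P_{D_m}^{out}=\frac{1}{\Gamma(a_X+1)\Gamma(a_Y+1)}\int_0^\infty u^{a_X}e^{-u}\,\gamma\!\left(a_Y+1,\frac{\sqrt{\tau_m}}{b_X b_Y u}\right)du .
\]

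Finally, since the factor $1/u$ inside the lower incomplete Gamma function rules out an elementary antiderivative, I would approximate this integral by the Gauss--Laguerre quadrature $\int_0^\infty e^{-u}\phi(u)\,du\approx\sum_{u=1}^U H_u\,\phi(x_u)$, with nodes $x_u$ and weights $H_u$ as in Theorem~\ref{theorem:1} and $\phi(u)=u^{a_X}\gamma\!\left(a_Y+1,\sqrt{\tau_m}/(b_X b_Y u)\right)$; substituting $\sqrt{\tau_m}=\sqrt{\overline{\lambda_m}\gamma_{th_m}(\beta\sigma_a^2 M\Omega_{na}+\sigma^2)}/\sqrt{\beta P_s^{a}(a_m-\gamma_{th_m}a_n)}$ then reproduces \eqref{OP for Dm}. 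The only non-routine parts are the two preliminary reductions: justifying that replacing the ARIS noise accumulation by its mean is accurate in the operating regime (in particular for moderately large $M$), and carefully exploiting that $\zeta_h$ and $\zeta_g$ are built from mutually independent channel blocks so that the product $\zeta_h\zeta_g$ decomposes cleanly into the nested integral above; once the problem is in that integral form, the remaining steps duplicate Appendix~A and present no difficulty.
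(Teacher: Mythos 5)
Your proposal is correct and follows essentially the same route as the paper: the paper's proof of Theorem \ref{theorem:2} simply defers to Appendix A, which likewise rearranges the outage event into $\zeta_h\zeta_g<\tau_m$ (with $a_m>\gamma_{th_m}a_n$ ensuring a positive coefficient), replaces the ARIS noise term by $M\Omega_{na}$, forms the product CDF via the nested integral $\int_0^\infty f_{\zeta_h}(x)F_{\zeta_g}(\tau_m/x)\,dx$, and applies Gauss--Laguerre quadrature. Your version merely makes explicit the substitutions ($v=\sqrt{y}/b_Y$, $u=\sqrt{x}/b_X$) that the paper compresses into ``performing a mathematical simplification,'' and correctly notes that the residual-interference averaging step of Appendix A is not needed here.
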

\subsection{The Outage Probability of the OMA Benchmark}
In this paper, the OMA scheme is set up as one of the benchmarks for comparison with the NOMA scheme under the double RISs assisted networks.
For OMA scheme, the transmission of users'informations throughout the system is orthogonal in the frequency domain, and in a dual users system, it can be assumed that each of the two users occupies half of the frequency domain resources. For OMA scheme, an outage occurs when the user's received SNR is less than the set SNR threshold, which can be expressed as
\begin{align}\label{OP for OMA definition}
P_{{D_\varphi }}^{OMA} = \Pr \left( {\gamma _{{D_\varphi }}^{OMA} < \gamma _{t{h_\varphi }}^{OMA}} \right),
\end{align}
where $\gamma _{t{h_\varphi }}^{OMA} = {2^{2{R_\varphi }}} - 1$ represents the target SNR threshold when detecting the $\varphi$-th user's signals, and $R_{\varphi}$ is the instantaneous target rate threshold for the $\varphi$-th user.
Similar to the above derivation process, the outage probabilities for the two users under double RISs assisted OMA networks are presented in the following theorem.
\begin{theorem} \label{theorem:3}
Under Rician fading and Nakagami-$m$ fading cascaded channels, the approximated closed-form expression for outage probability of the $\varphi$-th user for double RISs assisted OMA networks can be expressed as
\begin{small}
\begin{align}\label{OP for OMAbenchmark}
  P_{{D_\varphi }}^{OMA} =& \sum\limits_{u = 1}^U {\frac{{{H_u}x_u^{{a_X}}}}{{\Gamma ({a_X} + 1)\Gamma ({a_Y} + 1)}}}  \hfill \nonumber \\
   &\times \gamma \left( {{a_Y} + 1,\frac{{\sqrt {\overline {{\lambda _\varphi }} \gamma _{t{h_\varphi }}^{OMA}\left( {\beta \sigma _a^2M{\Omega _{na}} + {\sigma ^2}} \right)} }}{{\sqrt {\beta {P_s^{a}}{a_\varphi }} {b_X}{b_Y}{x_u}}}} \right),
\end{align}
\end{small}where $\varphi  \in \left\{ {m,n} \right\}$.
\end{theorem}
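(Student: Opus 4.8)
The plan is to reuse the machinery of Appendix~A (and of the proof of Theorem~\ref{theorem:2}) almost verbatim, because the OMA SNRs \eqref{SNROMAN}--\eqref{SNROMAD} are obtained from the NOMA SINRs \eqref{SINRN}--\eqref{SINRD} simply by deleting the co-user interference term from the denominator; in particular there is no constraint of the type $a_m>\gamma_{th_m}a_n$ to track, since for OMA the SNR exceeds its threshold whenever $\zeta_g$ is large enough. First I would substitute \eqref{SNROMAN} (respectively \eqref{SNROMAD}) into the outage definition \eqref{OP for OMA definition}, with $\gamma_{th_\varphi}^{OMA}=2^{2R_\varphi}-1$. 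Under the coherent phase-shifting scheme the two cascaded gains collapse to $\zeta_h=\bigl|\sum_{n=1}^{N}|h_n^1 h_n^2|\bigr|^2$ and $\zeta_g=\bigl|\sum_{m=1}^{M}|g_m^1 g_m^\varphi|\bigr|^2$, whose approximate PDFs are \eqref{zetahpdf} and \eqref{zetagpdf}, and the noise-amplification factor $\left\|\mathbf{g}_\varphi^H\mathbf{\Phi}_a\right\|^2$ would be replaced by its mean $M\Omega_{na}$, exactly as in Theorems~\ref{theorem:1} and \ref{theorem:2}.

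Next I would rearrange the outage inequality to isolate $\zeta_g$. Collecting the distance factors into $\overline{\lambda_\varphi}=\eta^{-4}d_{h1}^{\alpha}d_{h2}^{\alpha}d_{g1}^{\alpha}d_{g\varphi}^{\alpha}$, the event $\gamma_{D_\varphi}^{OMA}<\gamma_{th_\varphi}^{OMA}$ becomes $\zeta_g<C/\zeta_h$ with $C=\overline{\lambda_\varphi}\,\gamma_{th_\varphi}^{OMA}\bigl(\beta\sigma_a^2 M\Omega_{na}+\sigma^2\bigr)/\bigl(\beta P_s^a a_\varphi\bigr)$. Using the independence of $\zeta_h$ and $\zeta_g$, conditioning on $\zeta_h=x$ and averaging gives $P_{D_\varphi}^{OMA}=\int_0^\infty F_{\zeta_g}(C/x)\,f_{\zeta_h}(x)\,dx$. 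The CDF needed here follows from \eqref{zetagpdf} via the substitution $t=s^2$, namely $F_{\zeta_g}(y)=\gamma\!\left(a_Y+1,\sqrt{y}/b_Y\right)/\Gamma(a_Y+1)$. Finally, the change of variable $x=(b_X x_u)^2$ turns the remaining single integral into the standard Gauss--Laguerre form $\frac{1}{\Gamma(a_X+1)\Gamma(a_Y+1)}\int_0^\infty e^{-x_u}\,x_u^{a_X}\,\gamma\!\left(a_Y+1,\sqrt{C}/(b_X b_Y x_u)\right)dx_u$, to which the $U$-point rule with nodes $x_u$ and weights $H_u$ is applied; substituting $\sqrt{C}$ back in reproduces \eqref{OP for OMAbenchmark}.

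The main --- and essentially the only --- obstacle is that the inner average over $\zeta_h$ of a lower incomplete Gamma function has no elementary antiderivative, so the expression is ``closed form'' only modulo the Gauss--Laguerre quadrature; the one point needing care is to check that, after $x=(b_X x_u)^2$, the exponential $e^{-\sqrt{x}/b_X}$ collapses exactly onto the Laguerre weight $e^{-x_u}$ while the Jacobian combines with $x^{(a_X-1)/2}$ to produce the $x_u^{a_X}$ factor and the remaining constants cancel. The replacement of $\left\|\mathbf{g}_\varphi^H\mathbf{\Phi}_a\right\|^2$ by $M\Omega_{na}$ is a law-of-large-numbers approximation over the $M$ reflecting elements, and the PDFs \eqref{zetahpdf}--\eqref{zetagpdf} are themselves Laguerre-series approximations; together these are why the result is stated as ``approximated'' rather than exact, in line with the rest of the paper.
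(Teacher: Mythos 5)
Your proposal is correct and matches the paper's (largely implicit) proof, which simply invokes the Appendix~A machinery: substitute the OMA SNR into the outage definition, replace $\left\|\mathbf{g}_\varphi^H\mathbf{\Phi}_a\right\|^2$ by $M\Omega_{na}$, integrate $F_{\zeta_g}(C/x)$ against $f_{\zeta_h}(x)$, and apply Gauss--Laguerre quadrature. Your observations that the $a_m>\gamma_{th_m}a_n$ condition disappears in OMA and that the quadrature/Laguerre-series steps are the source of the ``approximated'' qualifier are both accurate and in fact more explicit than what the paper provides.
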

\subsection{Diversity Analysis}
Diversity order is an important metric for measuring system communication performance and describes how fast the outage probability decreases with an increasing SNR \cite{div}.
The diversity order can be expressed as the ratio of the asymptotic outage probability in the high SNR region, which can be expressed as
\begin{align}\label{Definition of the diversity order}
\mathop {\lim }\limits_{{P_s^{a}} \to \infty }  - \frac{{\log P_{out}^\infty \left( {{P_s^{a}}} \right)}}{{\log {P_s^{a}}}},
\end{align}
where ${P_{out}^\infty \left( {{P_s^{a}}} \right)}$ represents the asymptotic outage probability at high SNR region, .i.e, ${{P_s^{a}} \to \infty }$.
The asymptotic outage probability of $D_n$ with ipSIC can be directly deduced from \eqref{OP for Dn with ipSIC} and leads to the following corollary.
\begin{corollary} \label{corollary2}
When ${{P_s^{a}} \to \infty }$, the expression for asymptotic outage probability of $D_n$ with ipSIC for PRIS-ARIS-NOMA under cascaded Rician fading and Nakagami-$m$ fading channels can be expressed as
\begin{align}\label{asymptotic outage probability of Dn with ipSIC}
  P_{out,{D_n}}^{\infty ,ipSIC} =& \sum\limits_{i = 1}^I {\sum\limits_{u = 1}^U {\frac{{{H_i}{H_u}x_u^{{a_X}}}}{{\Gamma ({a_X} + 1)\Gamma ({a_Y} + 1)}}} }  \hfill \nonumber \\
   &\times \gamma \left( {{a_Y} + 1,\frac{{\sqrt {\overline {{\lambda _n}} {\gamma _{t{h_n}}}{\Omega _{RI}}{x_i}} }}{{\sqrt {\beta {a_n}} {b_X}{b_Y}{x_u}}}} \right).
\end{align}
\end{corollary}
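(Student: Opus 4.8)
The plan is to read off \eqref{asymptotic outage probability of Dn with ipSIC} directly from the exact closed form \eqref{OP for Dn with ipSIC} of Theorem~\ref{theorem:1}, specialized to $\varpi = 1$, by letting ${P_s^{a}} \to \infty$. The crucial structural fact is that \eqref{OP for Dn with ipSIC} is a \emph{finite} double sum in which ${P_s^{a}}$ enters only through the second argument of the lower incomplete Gamma function $\gamma\!\left({a_Y}+1,\cdot\right)$; so the whole derivation reduces to (i) evaluating the high-SNR limit of that argument and (ii) passing the limit through the finite sum using continuity of $x \mapsto \gamma\!\left({a_Y}+1,x\right)$.

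First I would isolate the ${P_s^{a}}$-dependent argument
\[
\Delta_{i,u}\!\left({P_s^{a}}\right) = \frac{\sqrt{\overline{{\lambda _n}}\,{\gamma _{t{h_n}}}\left(\varpi {P_s^{a}}{\Omega _{RI}}{x_i} + \beta \sigma _a^2 M {\Omega _{na}} + {\sigma ^2}\right)}}{\sqrt{\beta {P_s^{a}}{a_n}}\,{b_X}{b_Y}{x_u}}
\]
with $\varpi = 1$. Inside the radical in the numerator, the residual-interference term $\varpi {P_s^{a}}{\Omega _{RI}}{x_i}$ is linear in ${P_s^{a}}$, whereas $\beta \sigma _a^2 M {\Omega _{na}} + {\sigma ^2}$ is a constant; hence as ${P_s^{a}} \to \infty$ the numerator behaves like $\sqrt{\overline{{\lambda _n}}\,{\gamma _{t{h_n}}}\,{P_s^{a}}{\Omega _{RI}}{x_i}}$, and the $\sqrt{{P_s^{a}}}$ in the denominator cancels it, giving
\[
\lim_{{P_s^{a}} \to \infty} \Delta_{i,u}\!\left({P_s^{a}}\right) = \frac{\sqrt{\overline{{\lambda _n}}\,{\gamma _{t{h_n}}}\,{\Omega _{RI}}{x_i}}}{\sqrt{\beta {a_n}}\,{b_X}{b_Y}{x_u}},
\]
which no longer depends on ${P_s^{a}}$. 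Substituting this limiting argument into each summand of the finite double sum of \eqref{OP for Dn with ipSIC} then reproduces \eqref{asymptotic outage probability of Dn with ipSIC} verbatim, which completes the argument.

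I do not expect a genuine technical obstacle: the derivation is a one-line asymptotic simplification followed by termwise passage to the limit, so the ``hard part'' is only conceptual. The point to underline is that the limit is a strictly positive constant independent of ${P_s^{a}}$ --- an outage floor induced by the residual interference $\varpi {\Omega _{RI}}{x_i}$ --- so that, by the diversity-order definition \eqref{Definition of the diversity order}, $D_n$ with ipSIC attains a zero diversity order. That observation is really the payoff of the corollary and should be recorded immediately after \eqref{asymptotic outage probability of Dn with ipSIC}.
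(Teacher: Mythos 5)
Your proposal is correct and matches the paper's intent exactly: the paper states only that \eqref{asymptotic outage probability of Dn with ipSIC} is ``directly deduced'' from \eqref{OP for Dn with ipSIC}, and your termwise limit of the incomplete-Gamma argument---where the $\sqrt{P_s^a}$ from the dominant residual-interference term cancels the $\sqrt{P_s^a}$ in the denominator---is precisely that deduction. The concluding observation about the outage floor and zero diversity order is likewise what the paper records in Remark~\ref{remark1}.
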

\begin{remark} \label{remark1}
From \textbf{Corollary\ref{corollary2}}, it can be deduced that the outage probability of $D_n$ with ipSIC will gradually converge to a constant as SNR tends to infinity. By substituting \eqref{asymptotic outage probability of Dn with ipSIC} into \eqref{Definition of the diversity order}, a zero diversity order for $D_n$ with ipSIC can be derived.
The reason for this phenomenon is that the residual interference during the ipSIC process limits the user acceptance performance improvement as the SNR increases, thus causing the outage performance to converge to a constant state.
\end{remark}
\begin{corollary} \label{corollary3}
With the condition of $N=M$, when ${{P_s^{a}} \to \infty }$, the expression for asymptotic outage probability of $D_n$ with pSIC for PRIS-ARIS-NOMA under cascaded Rician fading and Nakagami-$m$ fading channels can be expressed as \eqref{asymptotic outage probability of Dn with pSIC} at the top of the next page.
\begin{figure*}[!t]
\normalsize
\begin{small}
\begin{align}\label{asymptotic outage probability of Dn with pSIC}
P_{out,{D_n}}^{\infty ,pSIC} = \frac{{\pi {\mu _b}{{\left( {\overline {{\lambda _n}} {\gamma _{t{h_n}}}} \right)}^M}{{\left( {\beta \sigma _a^2M{\Omega _{na}} + {\sigma ^2}} \right)}^M}}}{{2K\left( {2M} \right)!\left( {2N - 1} \right)!{{\left( {\beta {P_s}{a_n}} \right)}^M}}}{\left[ {\frac{{4\sqrt \pi  \Gamma \left( {2{m_{na}}} \right)\Delta \left( 0 \right)}}{{{e^\kappa }\Gamma \left( {{m_{na}}} \right)}}} \right]^{M + N}}\sum\limits_{k = 1}^K {{{\left[ {\frac{{\left( {{x_k}{\text{ + }}1} \right){\mu _b}}}{2}} \right]}^{2N - 2M - 1}}\sqrt {1 - {x_k^2}} }.
\end{align}
\end{small}
\hrulefill \vspace*{0pt}
\end{figure*}
Where $K$ is the parameter of the Gauss-Chebyshev quadrature formula for precision.
$\mu _b$ is the upper limit of integral before using the Gauss-Chebyshev quadrature formula for the approximation and ${\mu _b} \to \infty $. ${x_k} = \cos \left( {\frac{{2k - 1}}{{2K}}\pi } \right)$.
$\Delta \left( 0 \right)$ is given as following where $F\left( { \cdot , \cdot ; \cdot ; \cdot } \right)$ is the Gauss hypergeometric function\cite[Eq. (9.100)]{table}.
\begin{align}\label{deta0}
  \Delta \left( 0 \right) =& \frac{{{{\left( {4\sqrt {\left( {\kappa  + 1} \right){m_{na}}} } \right)}^{1 - {m_{na}}}}{{\left[ {\left( {\kappa  + 1} \right){m_{na}}} \right]}^{\frac{1}{2}\left( {1 + {m_{na}}} \right)}}}}{{\Gamma \left( {{m_{na}} + \frac{3}{2}} \right)}} \hfill \nonumber \\
   &\times F\left( {2,\frac{3}{2} - {m_{na}};{m_{na}} + \frac{3}{2};1} \right)
\end{align}
\end{corollary}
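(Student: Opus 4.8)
\emph{Proof outline.} The plan is to derive \eqref{asymptotic outage probability of Dn with pSIC} from the exact single-link density \eqref{Single Cascade Channel PDF}, not from the moment-matched surrogates \eqref{zetahpdf}--\eqref{zetagpdf}, since those do not reproduce the behaviour of the cascade gains near the origin, which is all that governs the high-SNR regime. First I would simplify the outage event of Corollary~\ref{corollary1}: with pSIC ($\varpi=0$), both parts of \eqref{outage event Dn} --- failure to decode $x_m$ at $D_n$, and failure to decode $x_n$ after decoding $x_m$ --- reduce, after substituting \eqref{SINRN} and the channel model, to $\zeta_h\zeta_g$ falling below a threshold, so $P_{out,{D_n}}^{\infty,pSIC}=\Pr(\zeta_h\zeta_g<\tau)$ with $\tau$ the larger of the two thresholds, which under the standing condition $a_m>\gamma_{t{h_m}}a_n$ is the one coming from $\{\gamma_{{D_n}}<\gamma_{t{h_n}}\}$. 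Writing the coherent-phase cascade gains as $\zeta_h=\bigl|\sum_{n=1}^N|h_n^1 h_n^2|\bigr|^2$, $\zeta_g=\bigl|\sum_{m=1}^M|g_m^1 g_m^\varphi|\bigr|^2$, replacing $\beta\sigma_a^2\|\mathbf{g}_n^H\mathbf{\Phi}_a\|^2$ by its mean $\beta\sigma_a^2 M\Omega_{na}$, and folding the path losses into $\overline{\lambda_n}$, one has $\tau=\frac{\overline{\lambda_n}\gamma_{t{h_n}}(\beta\sigma_a^2 M\Omega_{na}+\sigma^2)}{\beta P_s^{a} a_n}\to 0$.

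The heart of the argument is the near-origin asymptotics of $\zeta_h$ and $\zeta_g$. Since $\tau\to0$, only small values of the two gains matter, and $\sum_{n=1}^N|h_n^1 h_n^2|$ is small only when every term $X_n=|h_n^1 h_n^2|$ is small. I would therefore extract the leading-order coefficient of $f_{X_n}(y)$ as $y\to0$ from \eqref{Single Cascade Channel PDF}, expanding each modified Bessel factor $K_{m-m_{na}+1}(\cdot)$ by its small-argument series and summing over $m$; the surviving lowest power is a single power of $y$ (for $m_{na}>1$), whose coefficient is precisely the constant assembled from $\Delta(0)$ in \eqref{deta0} and the Gauss hypergeometric value $F(2,\tfrac{3}{2}-m_{na};m_{na}+\tfrac{3}{2};1)$. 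Writing $f_{X_n}(y)\approx c^\star y$ with $c^\star=\frac{4\sqrt{\pi}\,\Gamma(2m_{na})\,\Delta(0)}{e^{\kappa}\Gamma(m_{na})}$, an $N$-fold Dirichlet/Beta-integral convolution gives $f_{\sum_n X_n}(y)\approx\frac{(c^\star)^N}{(2N-1)!}\,y^{2N-1}$, hence $F_{\zeta_h}(x)=\Pr\!\bigl(\sum_n X_n<\sqrt{x}\bigr)\approx\frac{(c^\star)^N}{(2N)!}\,x^{N}$, and symmetrically $f_{\zeta_g}(y)\approx\frac{M(c^\star)^M}{(2M)!}\,y^{M-1}$. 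This is exactly what produces the $(2N-1)!$, $(2M)!$ and $[\,\cdot\,]^{M+N}$ factors in \eqref{asymptotic outage probability of Dn with pSIC}.

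To finish I would evaluate $P_{out,{D_n}}^{\infty,pSIC}=\int_0^\infty F_{\zeta_h}(\tau/y)\,f_{\zeta_g}(y)\,dy$: inserting the two approximations makes the integrand proportional to $\tau^{N}y^{M-1-N}$; truncating the range at a large $\mu_b\to\infty$, substituting $y=\tfrac{\mu_b}{2}(x+1)$, and applying the Gauss--Chebyshev quadrature with nodes $x_k=\cos\!\bigl(\tfrac{2k-1}{2K}\pi\bigr)$ converts the remaining integral into the finite sum $\sum_{k=1}^{K}\bigl[\tfrac{(x_k+1)\mu_b}{2}\bigr]^{2N-2M-1}\sqrt{1-x_k^2}$ of \eqref{asymptotic outage probability of Dn with pSIC} (under $N=M$ the $\mu_b$-dependence cancels, consistently with $\mu_b\to\infty$); collecting the constants $(\overline{\lambda_n}\gamma_{t{h_n}})^{M}$, $(\beta\sigma_a^2 M\Omega_{na}+\sigma^2)^{M}$, $(\beta P_s^{a} a_n)^{-M}$ and $(c^\star)^{M+N}$ then reproduces \eqref{asymptotic outage probability of Dn with pSIC}. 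Feeding this into \eqref{Definition of the diversity order} shows the outage of $D_n$ with pSIC decays as $(P_s^{a})^{-M}$, i.e.\ the diversity order is $M$ (equivalently $N$, by the hypothesis), tying it to the number of ARIS reflecting elements.

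The step I expect to be the main obstacle is the near-origin expansion of $f_{X_n}$ in the second paragraph: because $m_{na}$ is in general non-integer, infinitely many terms of the series in \eqref{Single Cascade Channel PDF} feed comparable powers of $y$, so one may not simply keep the $m=0$ term --- the correct leading coefficient demands resumming the relevant subseries, and it is exactly this resummation that yields the Gauss hypergeometric function in \eqref{deta0}, whose convergence at argument $1$ forces the implicit hypothesis $m_{na}>1$. A secondary point is justifying that replacing $\beta\sigma_a^2\|\mathbf{g}_n^H\mathbf{\Phi}_a\|^2$ by its mean, discarding the $x_m$-decoding constraint at $D_n$, and the Gauss--Chebyshev / $\mu_b$ truncation all incur only higher-order error as $P_s^{a},K,\mu_b\to\infty$.
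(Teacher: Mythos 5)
Your proposal is correct and, at the level of structure, is the same argument as the paper's Appendix~B: reduce the pSIC outage event to $\Pr(\hat \zeta_h\hat \zeta_g<\sqrt{\tau})$ with a vanishing threshold $\tau\propto 1/P_s^{a}$, replace each single-cascade density by its leading near-origin behaviour $c^\star y$ with $c^\star=\frac{4\sqrt{\pi}\,\Gamma(2m_{na})\Delta(0)}{e^{\kappa}\Gamma(m_{na})}$, convolve to get the $\frac{(c^\star)^N y^{2N-1}}{(2N-1)!}$ scaling for the sums, and evaluate the product CDF by a $\mu_b$-truncated Gauss--Chebyshev quadrature, exactly reproducing the $(2M)!$, $(2N-1)!$ and $[c^\star]^{M+N}$ factors of \eqref{asymptotic outage probability of Dn with pSIC}. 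The one place you genuinely diverge is the extraction of $c^\star$, which is also the step you flag as the main obstacle: you propose a direct small-argument expansion of the Bessel factors in \eqref{Single Cascade Channel PDF} and assert that the hypergeometric function in \eqref{deta0} arises from resumming the series over $m$. That is not how the paper obtains it, and no resummation is needed: the paper Laplace-transforms \eqref{Single Cascade Channel PDF} term by term via the tabulated integral $\int_0^\infty e^{-sy}y^{\mu-1}K_\nu(2ay)\,dy$, so a Gauss hypergeometric function with argument $\Upsilon_s$ appears separately for each $m$; letting $s\to\infty$ gives $\Upsilon_s\to1$ and a decay $s^{-(2m+2)}$, so only the $m=0$ term survives at leading order (for $m_{na}>1$ the $m\ge1$ terms and the $y^{2m_{na}-1}$ branch of each Bessel expansion feed strictly higher powers of $y$), and $\Delta(0)$ is simply that single coefficient evaluated at unit argument. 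Your observation that convergence of $F(2,\tfrac32-m_{na};m_{na}+\tfrac32;1)$ implicitly requires $m_{na}>1$ is correct and is indeed left unstated in the paper. Your real-domain Beta-integral convolution and the paper's Laplace-domain $N$-th power followed by inversion are equivalent; likewise, conditioning on $\zeta_g$ (yours) rather than on $\hat\zeta_h$ (the paper's \eqref{OP for Dn with pSIC step7}) exchanges $\tau^N$ for $\tau^M$ in the prefactor, which is immaterial under the corollary's hypothesis $N=M$.
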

\begin{proof} See Appendix B.
\end{proof}
\begin{remark} \label{remark2}
From \textbf{Corollary \ref{corollary3}}, it can be deduced that the asymptotic outage probability of $D_n$ with pSIC is an oblique line that decreases as SNR increases.
By substituting \eqref{asymptotic outage probability of Dn with pSIC} into \eqref{Definition of the diversity order}, the diversity order of $D_n$ with pSIC can be derived as $M$, which is related to the number of the ARIS's reflecting elements.
\end{remark}
\begin{corollary} \label{corollary4}
In the same analogy with the process of solving \eqref{asymptotic outage probability of Dn with pSIC}, the expression for asymptotic outage probability of $D_m$ for PRIS-ARIS-NOMA under cascaded Rician fading and Nakagami-$m$ fading channels can be expressed as \eqref{asymptotic outage probability of Dm} at the top of the next page.
\begin{figure*}[!t]
\normalsize
\begin{small}
\begin{align}\label{asymptotic outage probability of Dm}
P_{out,{D_m}}^\infty  = \frac{{\pi {\mu _b}{{\left( {\overline {{\lambda _m}} {\gamma _{t{h_m}}}} \right)}^M}{{\left( {\beta \sigma _a^2M{\Omega _{na}} + {\sigma ^2}} \right)}^M}}}{{2K\left( {2M} \right)!\left( {2N - 1} \right)!{{\left[ {\beta {P_s}\left( {{a_m} - {\gamma _{t{h_m}}}{a_n}} \right)} \right]}^M}}}{\left[ {\frac{{4\sqrt \pi  \Gamma \left( {2{m_{na}}} \right)\Delta \left( 0 \right)}}{{{e^\kappa }\Gamma \left( {{m_{na}}} \right)}}} \right]^{M + N}}\sum\limits_{k = 1}^K {{{\left[ {\frac{{\left( {{x_k}{\text{ + }}1} \right){\mu _b}}}{2}} \right]}^{2N - 2M - 1}}\sqrt {1 - {x_k^2}} }.
\end{align}
\end{small}
\hrulefill \vspace*{0pt}
\end{figure*}
\end{corollary}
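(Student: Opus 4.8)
The plan is to reproduce the argument of Appendix~B almost line for line, because the outage event of $D_m$ reduces to exactly the type of event analysed there for $D_n$ with pSIC. First I would substitute \eqref{SINRD} into \eqref{outage event Dm}, obtaining \eqref{OP for Dm step1}. Clearing the denominator and replacing the noise-amplification term ${\left\|{\mathbf{g}}_m^H{\mathbf{\Phi}}_a\right\|}^2$ by its mean $M\Omega_{na}$ exactly as in Appendix~A, the event $\{\gamma_{D_m}<\gamma_{th_m}\}$ becomes $\bigl\{\beta P_s^a(a_m-\gamma_{th_m}a_n)\,{\left|{\mathbf{h}}_2^H{\mathbf{\Phi}}_p{\mathbf{h}}_1\right|}^2{\left|{\mathbf{g}}_m^H{\mathbf{\Phi}}_a{\mathbf{g}}_1\right|}^2<\gamma_{th_m}\bigl(\beta\sigma_a^2M\Omega_{na}+\sigma^2\bigr)\bigr\}$. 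Provided $a_m>\gamma_{th_m}a_n$ (if not, the distant user is permanently in outage and $P_{D_m}^{out}\to 1$), this is $\Pr\!\left(\zeta_h\zeta_g<\dfrac{\gamma_{th_m}\bigl(\beta\sigma_a^2M\Omega_{na}+\sigma^2\bigr)}{\beta P_s^a(a_m-\gamma_{th_m}a_n)}\right)$ once the path-loss factor $\overline{\lambda_m}=\eta^{-4}d_{h1}^\alpha d_{h2}^\alpha d_{g1}^\alpha d_{gm}^\alpha$ is pulled out of the cascaded gains, with $d_{gm}^\alpha$ entering through the coefficient ${\mathbf{g}}_m=\sqrt{\eta d_{gm}^{-\alpha}}[\,\cdot\,]$ and through nothing else.

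Comparing this threshold with the one treated in Appendix~B for $D_n$ with pSIC, namely $\Pr\!\left(\zeta_h\zeta_g<\dfrac{\gamma_{th_n}(\beta\sigma_a^2M\Omega_{na}+\sigma^2)}{\beta P_s^a a_n}\right)$, the two agree after the substitutions $\gamma_{th_n}\mapsto\gamma_{th_m}$, $a_n\mapsto a_m-\gamma_{th_m}a_n$ and $\overline{\lambda_n}\mapsto\overline{\lambda_m}$; the structure of the threshold, a constant $\bigl(\beta\sigma_a^2M\Omega_{na}+\sigma^2\bigr)$ over $\beta P_s^a$ times a rate-dependent factor, is identical. I would therefore carry over, unchanged, the high-SNR CDF expansion of $\zeta_h\zeta_g$ developed there: go back to the exact single-link density \eqref{Single Cascade Channel PDF}, use the small-argument form of the modified Bessel function ${K_\phi}(\cdot)$ to extract its leading power, propagate that power through the sums defining $\zeta_h$ and $\zeta_g$, integrate the resulting product density up to the vanishing threshold, and reduce the remaining one-dimensional integral by the Gauss–Chebyshev quadrature with nodes $x_k=\cos\!\left(\frac{2k-1}{2K}\pi\right)$ and upper limit $\mu_b\to\infty$, the hypergeometric constant $\Delta(0)$ of \eqref{deta0} being produced exactly as before. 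Inserting the three substitutions into the final expression of Appendix~B yields \eqref{asymptotic outage probability of Dm}, and feeding it into \eqref{Definition of the diversity order} gives the diversity order $M$ for $D_m$, just as in \textbf{Remark~\ref{remark2}} for $D_n$ with pSIC.

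The one place where one cannot simply quote Appendix~B is the bookkeeping of constants through the chain of approximations, and that is where I expect the only real work to be. I would check in particular that the whole factor $a_m-\gamma_{th_m}a_n$, and not merely $a_n$, carries the power $M$ in the denominator of \eqref{asymptotic outage probability of Dm}; that the positivity condition $a_m>\gamma_{th_m}a_n$ is precisely what keeps the rearranged inequality orientation-preserving; that the exponents of $\mu_b$ arising from the prefactor and from the summand combine to a finite, nonzero limit under the same constraint relating $N$ and $M$ imposed in \textbf{Corollary~\ref{corollary3}}; and that the last channel distance in $\overline{\lambda_m}$ is $d_{gm}$ rather than $d_{gn}$. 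Once these points are verified no new estimate is required, so the proof consists of the reduction in the first paragraph followed by the computation already carried out in Appendix~B.
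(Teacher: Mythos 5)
Your proposal is correct and follows essentially the same route as the paper, which itself proves this corollary by deferring to the Appendix~B derivation for $D_n$ with pSIC: you reduce the outage event of $D_m$ to $\Pr(\zeta_h\zeta_g<\text{threshold})$ with the substitutions $\gamma_{th_n}\mapsto\gamma_{th_m}$, $a_n\mapsto a_m-\gamma_{th_m}a_n$, $\overline{\lambda_n}\mapsto\overline{\lambda_m}$, and then reuse the high-SNR CDF of the cascaded gain from \eqref{OP for Dn with pSIC step7}. The only cosmetic difference is that you describe the leading-order extraction via the small-argument Bessel behaviour rather than the Laplace-transform/convolution route Appendix~B actually uses, but since you carry over that expansion unchanged this does not affect the argument.
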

\begin{proof} The proof process of this corollary can be referred to the proof process of \textbf{Corollary \ref{corollary3}} in \textbf{Appendix B}.
\end{proof}
\begin{remark} \label{remark3}
By substituting \eqref{asymptotic outage probability of Dm} into \eqref{Definition of the diversity order}, the diversity order of $D_m$ can be derived as $M$, which is related to the number of the ARIS's reflecting elements.
\end{remark}
\subsection{Delay-limited Transmission}
In delay-limited transmission scheme, the BS sends signals at a constant power but the system throughput is subject to the outage probability of each user \cite{Wireless_informations}.
Therefore, the system throughput for PRIS-ARIS-NOMA under cascaded Rician fading and Nakagami-$m$ fading channels can be expressed as
\begin{align}\label{delay limited}
R_\varpi ^{{\text{limited}}} = \left( {1 - P_{{D_n}}^{out,\varpi }} \right){R_n} + \left( {1 - P_{{D_m}}^{out}} \right){R_m},
\end{align}
where $\varpi$ means ipSIC/pSIC scheme, $P_{{D_n}}^{out,ipSIC}$, $P_{{D_n}}^{out,pSIC}$ and $P_{{D_m}}^{out}$ can respectively be obtained from \eqref{OP for Dn with ipSIC}, \eqref{OP for Dn with pSIC} and \eqref{OP for Dm}.

\section{Ergodic Data Rate Analysis}\label{ergodic rate analysis}
The ergodic data rate is another important performance evaluation metric for wireless communication systems.
It is the maximum rate at which the code of the transmitting signals can traverse all fading states, i.e., the maximum rate at which the system can transmit signals correctly in a fading channel.
And the definition of the ergodic data rate can be expressed as
\begin{align}\label{ergodic rate}
{R^{ergodic}} = \mathbb{E}\left[ {{{\log }_2}\left( {1 + \gamma } \right)} \right],
\end{align}
where $\gamma$ means the SINR for users.
In this section the ergodic data rate expressions for $D_n$ with ipSIC/pSIC and $D_m$ are derived in detail. And we further derived the ergodic data rate slope expressions of $D_n$ and $D_m$.
\subsection{The Ergodic Data Rates of $D_n$}
When solving for the ergodic data rate of $D_n$, we assume that $D_n$ can successfully detect and decode the signals of $D_m$, then the ergodic data rate expression of $D_n$ with ipSIC can be given in the following theorem.
\begin{theorem} \label{theorem:4}
Under cascaded Rician fading and Nakagami-$m$ fading channels, the closed-form expression for ergodic data rate of $D_n$ with ipSIC for PRIS-ARIS-NOMA can be expressed as
\begin{small}
\begin{align}\label{ergodic for Dn with ipSIC}
  R_{Dn,ipSIC}^{ergodic} =& \sum\limits_{p = 1}^P {\sum\limits_{i = 1}^I {\sum\limits_{u = 1}^U {\frac{{{H_p}{H_i}{H_u}x_u^{{a_X}}x_p^{{a_Y}}}}{{\ln 2\Gamma ({a_X} + 1)\Gamma ({a_Y} + 1)}}} } }  \hfill \nonumber \\
   &\times \ln \left( {1 + \frac{{\beta {P_s^{a}}{a_n}b_X^2b_Y^2x_u^2x_p^2}}{{{{\bar \lambda }_n}\left( {\varpi {P_s^{a}}{\Omega _{RI}}{x_i} + \beta \sigma _a^2M{\Omega _{na}} + {\sigma ^2}} \right)}}} \right),
\end{align}
\end{small}where $x_p$ and $H_p$ are the parameters of Gauss-Laguerre quadrature formula and both of the two parameters share the same meaning with ${x_u},{x_i}$ and ${H_u},{H_i}$ ,respectively.
\begin{proof} See Appendix C.
\end{proof}
\end{theorem}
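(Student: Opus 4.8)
The plan is to evaluate the expectation in \eqref{ergodic rate} directly for the SINR $\gamma_{D_n}$ of \eqref{SINRN} with $\varpi=1$, recycling the statistical reductions already used for Theorem \ref{theorem:1}. First I would write $\log_2(1+\gamma_{D_n})=\frac{1}{\ln 2}\ln(1+\gamma_{D_n})$ and, as in Appendix A, replace the amplified-noise power $\beta\sigma_a^2\|\mathbf{g}_n^H\mathbf{\Phi}_a\|^2$ by the same deterministic surrogate $\beta\sigma_a^2 M\Omega_{na}$ adopted there, and express the product of the two cascaded channel powers as $\zeta_h\zeta_g/\bar\lambda_n$ with $\bar\lambda_n=\eta^{-4}d_{h1}^\alpha d_{h2}^\alpha d_{g1}^\alpha d_{gn}^\alpha$. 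After this reduction $\gamma_{D_n}$ is a simple rational function of three mutually independent random variables: $\zeta_h$ with PDF \eqref{zetahpdf}, $\zeta_g$ with PDF \eqref{zetagpdf}, and $|h_{RI}|^2$, which is exponential with mean $\Omega_{RI}$. The ergodic rate is then the triple integral of $\ln(1+\gamma_{D_n})$ against the product of these three densities, divided by $\ln 2$.

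The crucial observation is that the ``square-root exponential'' densities in \eqref{zetahpdf}--\eqref{zetagpdf} collapse to standard Laguerre weights after an elementary substitution: putting $\zeta_h=b_X^2 t^2$ turns $f_{\zeta_h}(\zeta_h)\,d\zeta_h$ into $\frac{t^{a_X}}{\Gamma(a_X+1)}e^{-t}\,dt$, putting $\zeta_g=b_Y^2 t^2$ turns $f_{\zeta_g}(\zeta_g)\,d\zeta_g$ into $\frac{t^{a_Y}}{\Gamma(a_Y+1)}e^{-t}\,dt$, and the linear rescaling $|h_{RI}|^2=\Omega_{RI}t$ leaves the factor $e^{-t}\,dt$. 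Carrying out these three substitutions, the integrand for $\mathbb{E}[\ln(1+\gamma_{D_n})]$ becomes the product of $\frac{t_1^{a_X}t_2^{a_Y}}{\Gamma(a_X+1)\Gamma(a_Y+1)}e^{-(t_1+t_2+t_3)}$ and the smooth factor $\ln\!\left(1+\frac{\beta P_s^a a_n b_X^2 b_Y^2 t_1^2 t_2^2}{\bar\lambda_n(P_s^a\Omega_{RI}t_3+\beta\sigma_a^2 M\Omega_{na}+\sigma^2)}\right)$. Note that, unlike the outage-probability computation, where the $\zeta_g$ integral produced an exact lower incomplete Gamma, the logarithm here forces a quadrature treatment of all three variables.

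Finally I would approximate each of the three one-dimensional integrals by Gauss--Laguerre quadrature, with $U$, $I$ and $P$ nodes, identifying the nodes and weights with $(x_u,H_u)$, $(x_i,H_i)$ and $(x_p,H_p)$ exactly as in Theorem \ref{theorem:1}; the $x_u^{a_X}$ and $x_p^{a_Y}$ factors are inherited from the Laguerre weights of $\zeta_h$ and $\zeta_g$, while the $|h_{RI}|^2$ integral carries no extra power of its node. Collecting the $1/\ln 2$ prefactor and the two Gamma normalizations reproduces \eqref{ergodic for Dn with ipSIC}, which remains valid for the pSIC case $\varpi=0$ by simply dropping the residual-interference sum.

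The main obstacle is that $\mathbb{E}[\ln(1+\text{ratio of products of random variables})]$ has no elementary closed form: because $|h_{RI}|^2$ sits in the denominator, the logarithm cannot be separated, so both the deterministic replacement of the amplified noise and the triple Gauss--Laguerre step are genuine approximations, and the result should be read as an approximate closed form. Care is also needed in the density transformations so the integrand lands exactly on the weight $t^{a}e^{-t}$ rather than an off-by-one variant, and one should note that the logarithm's argument is positive and bounded on the integration region, which is what makes the quadrature reliable.
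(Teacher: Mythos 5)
Your proposal is correct and follows essentially the same route as the paper's Appendix C: the same deterministic replacement $\beta\sigma_a^2\left\|\mathbf{g}_n^H\mathbf{\Phi}_a\right\|^2 \approx \beta\sigma_a^2 M\Omega_{na}$, the same Laguerre-weight reduction of the densities of $\zeta_h$ and $\zeta_g$, and a triple Gauss--Laguerre quadrature over $\zeta_h$, $\zeta_g$ and $|h_{RI}|^2$ producing the $u$-, $p$- and $i$-sums of \eqref{ergodic for Dn with ipSIC}. The only difference is organizational---the paper passes through the CDF and then the PDF of $\gamma_{D_n}$ inherited from \textbf{Theorem \ref{theorem:1}} before applying the final quadrature, whereas you integrate the three underlying variables directly---and your closing caveat that the result is an approximate rather than exact closed form (quadrature plus the thermal-noise surrogate) is accurate.
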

\begin{corollary} \label{corollary5}
When it comes to the special case of $\varpi  = 0$, the closed-formed expression for ergodic data rate of $D_n$ with pSIC for PRIS-ARIS-NOMA can be expressed as
\begin{align}\label{ergodic for Dn with pSIC}
  R_{Dn,ipSIC}^{ergodic} =& \sum\limits_{p = 1}^P {\sum\limits_{u = 1}^U {\frac{{{H_p}{H_u}x_u^{{a_X}}x_p^{{a_Y}}}}{{\ln 2\Gamma ({a_X} + 1)\Gamma ({a_Y} + 1)}}} }  \hfill \nonumber \\
   &\times \ln \left( {1 + \frac{{\beta P_s^a{a_n}b_X^2b_Y^2x_u^2x_p^2}}{{{{\bar \lambda }_n}\left( {\beta \sigma _a^2M{\Omega _{na}} + {\sigma ^2}} \right)}}} \right).
\end{align}
\end{corollary}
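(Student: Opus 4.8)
The plan is to obtain \eqref{ergodic for Dn with pSIC} as the degenerate case $\varpi = 0$ of \textbf{Theorem~\ref{theorem:4}}, in exactly the way \textbf{Corollary~\ref{corollary1}} was extracted from \textbf{Theorem~\ref{theorem:1}}. First I would recall that the residual interference of the ipSIC operation enters the analysis of $D_n$ only through the term $\varpi P_s^a \Omega_{RI} x_i$ in the denominator of \eqref{SINRN}, and that, after the derivation carried out in Appendix~C, this term surfaces inside the logarithm of \eqref{ergodic for Dn with ipSIC}. Setting $\varpi = 0$, which is precisely the pSIC operation, deletes this term, so the argument of $\ln(\cdot)$ no longer depends on the auxiliary variable $x_i$ that had been introduced to average over $|h_{RI}|^2 \sim \mathcal{C}\mathcal{N}(0,\Omega_{RI})$.

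Next I would note that, since the summand is now independent of $x_i$, the Gauss--Laguerre quadrature in the index $i$ collapses to $\sum_{i=1}^{I} H_i \approx \int_0^\infty e^{-t}\,dt = 1$, which is in fact exact because Gauss--Laguerre integrates a degree-zero polynomial without error. Hence the triple sum $\sum_p \sum_i \sum_u$ of \eqref{ergodic for Dn with ipSIC} contracts to the double sum $\sum_p \sum_u$, the weight product $H_p H_i H_u$ becomes $H_p H_u$, and what remains is exactly \eqref{ergodic for Dn with pSIC}.

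If instead a self-contained derivation is preferred to a mere specialisation, I would redo Appendix~C starting from the pSIC SINR $\gamma_{D_n} = \beta P_s^a \zeta_h \zeta_g a_n / (\beta \sigma_a^2 M \Omega_{na} + \sigma^2)$, obtained from \eqref{SINRN} with $\varpi = 0$ after absorbing the path-loss constants into $\bar{\lambda}_n$ and replacing $\|\mathbf{g}_n^H \mathbf{\Phi}_a\|^2$ by its mean $M \Omega_{na}$. Then I would write $R^{ergodic}_{Dn,pSIC} = \mathbb{E}\{\log_2(1+\gamma_{D_n})\}$, insert the approximate PDFs \eqref{zetahpdf} and \eqref{zetagpdf} of $\zeta_h$ and $\zeta_g$, substitute $\sqrt{x}/b_X$ and $\sqrt{y}/b_Y$ so that both integrals take the Gauss--Laguerre form $\int_0^\infty e^{-t} t^{a}\, g(t)\,dt$, and apply the quadrature rule once in each variable. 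Only two quadratures are needed here, which is the structural reason the pSIC expression carries just the indices $p$ and $u$.

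I do not anticipate a genuine obstacle: the single point worth a sentence of justification is the legitimacy of discarding the $x_i$-summation, i.e., that $\sum_{i=1}^{I} H_i = 1$ for the Gauss--Laguerre weights, so that no spurious constant multiplier is left behind; everything else follows immediately from the already-established \textbf{Theorem~\ref{theorem:4}}.
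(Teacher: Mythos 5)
Your proposal is correct and matches the paper's (implicit) route: the paper offers no separate proof of this corollary, obtaining it exactly as the $\varpi=0$ specialization of Theorem~4, with the $i$-sum collapsing because the summand no longer depends on $x_i$. Your explicit observation that $\sum_{i=1}^{I}H_i=\int_0^\infty e^{-t}\,dt=1$ (exact for Gauss--Laguerre on a constant) is the one justification the paper leaves unstated, and it is the right one.
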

\subsection{The Ergodic Data Rates of $D_m$}
Similar to the method used for solving the ergodic data rate of $D_n$, the ergodic data rate expression of $D_m$ can be given in the following theorem.
\begin{theorem} \label{theorem:5}
Under cascaded Rician fading and Nakagami-$m$ fading channels, the closed-form expression for ergodic data rate of $D_m$ for PRIS-ARIS-NOMA can be expressed as
\begin{align}\label{ergodic for Dm}
  &R_{{D_m}}^{ergodic} = \sum\limits_{k = 1}^K {{\delta _k}\left[ {1 - \sum\limits_{u = 1}^U {\frac{{{H_u}x_u^{{a_X}}}}{{\Gamma ({a_X} + 1)\Gamma ({a_Y} + 1)}}} } \right.}  \hfill \nonumber \\
   &\times \left. {\gamma \left( {{a_Y} + 1,\frac{{\sqrt {\overline {{\lambda _m}} \left( {\beta \sigma _a^2M{\Omega _{na}} + {\sigma ^2}} \right)\left( {{x_k} + 1} \right){a_m}} }}{{\sqrt {{a_m}{a_n}\beta {P_s^{a}}\left( {1 - {x_k}} \right)} {b_X}{b_Y}{x_u}}}} \right)} \right],
\end{align}
where ${\delta _k} = \frac{{\pi {a_m}\sqrt {1 - x_k^2} }}{{K\ln 2\left[ {2{a_n} + {a_m}\left( {{x_k}{\text{ + }}1} \right)} \right]}}$.
\begin{proof} See Appendix D.
\end{proof}
\end{theorem}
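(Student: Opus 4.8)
The plan is to start from the definition \eqref{ergodic rate}, namely $R_{D_m}^{ergodic} = \mathbb{E}\left[\log_2\left(1+\gamma_{D_m}\right)\right]$ with $\gamma_{D_m}$ given by \eqref{SINRD}. The first observation is that $\gamma_{D_m}$ has \emph{bounded} support: dividing the numerator and denominator of \eqref{SINRD} by $\beta P_s^a\left|{\mathbf{h}}_2^H{\mathbf{\Phi}}_p{\mathbf{h}}_1\right|^2\left|{\mathbf{g}}_m^H{\mathbf{\Phi}}_a{\mathbf{g}}_1\right|^2$ shows $\gamma_{D_m} < a_m/a_n$ almost surely, with $\gamma_{D_m}\to a_m/a_n$ as the cascaded gains grow. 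I would then use the integration-by-parts identity $\mathbb{E}\left[\log_2\left(1+X\right)\right] = \frac{1}{\ln 2}\int_0^\infty \frac{1-F_X(x)}{1+x}\,dx$, which here truncates to
\begin{align*}
R_{D_m}^{ergodic} = \frac{1}{\ln 2}\int_0^{a_m/a_n}\frac{1-F_{\gamma_{D_m}}(x)}{1+x}\,dx .
\end{align*}

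Next I would identify $F_{\gamma_{D_m}}(x) = \Pr\left(\gamma_{D_m} < x\right)$ for a generic threshold $x\in\left(0,a_m/a_n\right)$. This is precisely the quantity computed in the proof of \textbf{Theorem~\ref{theorem:2}} with $\gamma_{th_m}$ replaced by $x$: conditioning on $\zeta_h$, using the PDF \eqref{zetahpdf} together with the substitution that turns the $\zeta_h$-integral into a Gauss--Laguerre sum, and recognizing the inner $\zeta_g$-integral as the CDF of $\zeta_g$ obtained from \eqref{zetagpdf} (a lower incomplete Gamma function). This yields
\begin{align*}
F_{\gamma_{D_m}}(x) = \sum_{u=1}^U \frac{H_u x_u^{a_X}}{\Gamma(a_X+1)\Gamma(a_Y+1)}\gamma\left(a_Y+1,\frac{\sqrt{\overline{\lambda_m}\,x\left(\beta\sigma_a^2 M\Omega_{na}+\sigma^2\right)}}{\sqrt{\beta P_s^a\left(a_m - x a_n\right)}\,b_X b_Y x_u}\right),
\end{align*}
valid exactly while $x < a_m/a_n$, i.e. over the whole range of integration; this is consistent with \eqref{OP for Dm} evaluated at $x=\gamma_{th_m}$.

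The last step is to evaluate the remaining one-dimensional integral. I would apply the change of variable $x = \frac{a_m(1+t)}{2a_n}$, which maps $t\in[-1,1]$ onto $x\in[0,a_m/a_n]$ and gives $dx = \frac{a_m}{2a_n}\,dt$, $1+x = \frac{2a_n + a_m(1+t)}{2a_n}$, and crucially $a_m - x a_n = \frac{a_m(1-t)}{2}$, so that the argument of the incomplete Gamma collapses to $\frac{\sqrt{\overline{\lambda_m}\left(\beta\sigma_a^2 M\Omega_{na}+\sigma^2\right)(1+t)a_m}}{\sqrt{a_m a_n\beta P_s^a(1-t)}\,b_X b_Y x_u}$. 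Writing $\int_{-1}^1 g(t)\,dt = \int_{-1}^1 \frac{g(t)\sqrt{1-t^2}}{\sqrt{1-t^2}}\,dt$ and invoking the Gauss--Chebyshev quadrature $\int_{-1}^1 \frac{h(t)}{\sqrt{1-t^2}}\,dt \approx \frac{\pi}{K}\sum_{k=1}^K h(x_k)$ with $x_k=\cos\left(\frac{2k-1}{2K}\pi\right)$ then produces the weight $\delta_k = \frac{\pi a_m\sqrt{1-x_k^2}}{K\ln 2\left[2a_n + a_m(x_k+1)\right]}$ multiplying the bracketed term $1 - F_{\gamma_{D_m}}\!\left(\tfrac{a_m(1+x_k)}{2a_n}\right)$, which is the stated expression.

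I expect the main obstacle to be bookkeeping rather than anything conceptual: one must verify that the bound $\gamma_{D_m} < a_m/a_n$ indeed makes the integrand vanish above $a_m/a_n$ so that no tail is dropped in the truncated integral, and that the two layers of approximation --- Gauss--Laguerre inside $F_{\gamma_{D_m}}$ and Gauss--Chebyshev outside --- are applied consistently. The algebraic identity $a_m - x a_n = \frac{a_m(1-t)}{2}$ under the chosen substitution is exactly what makes the incomplete-Gamma argument land on the claimed form, so pinning down the "right" substitution and checking that simplification is the crux of the computation.
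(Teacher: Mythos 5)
Your proposal is correct and follows essentially the same route as the paper's Appendix D: the truncated integral representation $\frac{1}{\ln 2}\int_0^{a_m/a_n}\frac{1-F_{\gamma_{D_m}}(x)}{1+x}\,dx$ justified by the bound $\gamma_{D_m}<a_m/a_n$, the CDF obtained by rerunning the Theorem~2 outage computation with threshold $x$, and a final Gauss--Chebyshev quadrature. In fact you supply more detail than the paper does on the substitution $x=\frac{a_m(1+t)}{2a_n}$ and the resulting weight $\delta_k$, which the paper compresses into a single sentence.
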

\subsection{The Ergodic Data Rates of OMA benchmark}
\begin{corollary} \label{theorem:5}
Under Rician fading and Nakagami-$m$ fading cascaded channels, the closed-form expression for ergodic rate of the $\varphi$-th user for double RISs assisted OMA networks can be expressed as
\begin{align}\label{ergodic for OMA}
  R_{{D_\varphi },OMA}^{ergodic} =& \sum\limits_{p = 1}^P {\sum\limits_{u = 1}^U {\frac{{{H_p}{H_u}x_u^{{a_X}}x_p^{{a_Y}}}}{{2\ln 2\Gamma ({a_X} + 1)\Gamma ({a_Y} + 1)}}} }  \hfill \nonumber \\
   &\times \ln \left( {1 + \frac{{\beta P_s^a{a_\varphi }b_X^2b_Y^2x_u^2x_p^2}}{{{{\bar \lambda }_\varphi }\left( {\beta \sigma _a^2M{\Omega _{na}} + {\sigma ^2}} \right)}}} \right).
\end{align}
\end{corollary}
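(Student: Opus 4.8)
The argument parallels that of \textbf{Theorem \ref{theorem:4}}, proved in \textbf{Appendix C}, so I only outline the structure. The plan is to begin from the ergodic-rate definition \eqref{ergodic rate} together with the OMA SNR expressions \eqref{SNROMAN} and \eqref{SNROMAD}, and --- since in the OMA protocol each of the two users occupies half of the time--frequency resources --- to carry along the pre-log factor $\frac{1}{2}$. Under the coherent phase shifting scheme I would replace ${\left| {{\bf{h}}_2^H{{\bf{\Phi }}_p}{{\bf{h}}_1}} \right|^2}$ and ${\left| {{\bf{g}}_\varphi ^H{{\bf{\Phi }}_a}{{\bf{g}}_1}} \right|^2}$ by the cascaded gains ${\zeta _h}$ and ${\zeta _g}$ introduced in Section \ref{System Model}, approximate the noise-folding term ${\left\| {{\bf{g}}_\varphi ^H{{\bf{\Phi }}_a}} \right\|^2} \approx M{\Omega _{na}}$ as in the companion derivations, and reinstate the path loss through ${{\bar \lambda }_\varphi }$. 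The quantity to be averaged then reduces to $\frac{1}{2}{\log _2}\!\left( 1 + \frac{\beta P_s^a a_\varphi {\zeta _h}{\zeta _g}}{{{\bar \lambda }_\varphi}\left( \beta \sigma_a^2 M{\Omega _{na}} + \sigma^2 \right)} \right)$, and its expectation is a double integral against the PDFs \eqref{zetahpdf} and \eqref{zetagpdf}.

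The next step is to evaluate that double integral. The key manipulation is the change of variables $t=\sqrt{x}/{b_X}$ in the ${\zeta _h}$-integral and, symmetrically, $s=\sqrt{y}/{b_Y}$ in the ${\zeta _g}$-integral: one verifies that ${f_{{\zeta _h}}}(x)\,dx = \frac{t^{a_X}}{\Gamma(a_X+1)}\,e^{-t}\,dt$, so that each normalized marginal collapses exactly onto the Gauss--Laguerre weight $t^{a_X}e^{-t}$, and likewise for ${\zeta _g}$. After the substitutions the logarithm's argument becomes $1 + \frac{\beta P_s^a a_\varphi b_X^2 b_Y^2 t^2 s^2}{{{\bar \lambda }_\varphi}\left( \beta \sigma_a^2 M{\Omega _{na}} + \sigma^2 \right)}$, and applying the Gauss--Laguerre quadrature rule twice --- with nodes and weights $({x_u},{H_u})$ and $({x_p},{H_p})$ --- produces exactly \eqref{ergodic for OMA}, the prefactor $\frac{1}{2\ln 2}$ arising from the OMA resource split together with the change of base of the logarithm.

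The routine parts are the change of variables and tracking the constants; the only genuinely non-elementary point is that $\int_0^\infty\!\!\int_0^\infty \ln(1+c\,xy)\,{f_{{\zeta _h}}}(x)\,{f_{{\zeta _g}}}(y)\,dx\,dy$ admits no elementary closed form, which is precisely why the Gauss--Laguerre approximation is used --- the whole argument hinges on recognizing that the rescaled marginals are Laguerre-type weights. Relative to \textbf{Theorem \ref{theorem:4}} this case is strictly simpler, since the OMA SINR carries neither co-user interference nor a residual-interference term in its denominator; hence the triple sum there degenerates to the double sum here, and beyond replacing $a_n$ by $a_\varphi$ and ${\bar \lambda }_n$ by ${{\bar \lambda }_\varphi }$ the only new ingredient is the extra $\frac{1}{2}$ prefactor.
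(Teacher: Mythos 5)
Your proposal is correct: the $\tfrac{1}{2}$ pre-log from the OMA resource split, the substitution of the cascaded gains ${\zeta _h},{\zeta _g}$ with path loss restored via ${\bar \lambda }_\varphi$, the approximation ${\left\| {{\bf{g}}_\varphi ^H{{\bf{\Phi }}_a}} \right\|^2} \approx M{\Omega _{na}}$, and the observation that $t = \sqrt{x}/b_X$ turns ${f_{{\zeta _h}}}(x)\,dx$ into $\tfrac{t^{a_X}}{\Gamma(a_X+1)}e^{-t}\,dt$ all check out, and two applications of Gauss--Laguerre then reproduce \eqref{ergodic for OMA} exactly. The paper states this corollary without an explicit proof; its template (Appendix C, for \textbf{Theorem \ref{theorem:4}}) proceeds differently in organization: it first forms the CDF of the user's SINR from the outage-probability result \eqref{OP for OMAbenchmark}, differentiates to get the PDF, and only then evaluates $\tfrac{1}{\ln 2}\int_0^\infty \ln(1+y)f_\gamma(y)\,dy$ by a final Gauss--Laguerre step. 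You instead average $\ln(1+c\,{\zeta _h}{\zeta _g})$ directly against the product density \eqref{zetahpdf}$\times$\eqref{zetagpdf}, skipping the distribution of the SINR altogether. The two routes rest on the same key fact (the rescaled marginals are Laguerre weights) and, because the inner quadrature commutes with the outer integral here, they yield the identical double sum; your version is arguably cleaner for this interference-free case, since there is no residual-interference average to fold in, while the paper's CDF-based route is the one that generalizes to the ipSIC triple sum of \eqref{ergodic for Dn with ipSIC}.
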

\subsection{Slope Analysis}
The ergodic data rate slope is another important metric for the performance of the networks to assess the evolution of ergodic data rate with the transmitting SNR, which can be captured from the ergodic data rate at high SNR region and defined as
\begin{align}\label{ergodic slope}
S = \mathop {\lim }\limits_{{P_s^{a}} \to \infty } \frac{{{R^{ergodic,\infty }}\left( {{P_s^{a}}} \right)}}{{\log {P_s^{a}}}},
\end{align}
where ${{R^{ergodic,\infty }}\left( {{P_s^{a}}} \right)}$ represents the asymptotic ergodic data rate at high SNR region.
\subsubsection{Slope analysis of $D_n$ with ipSIC}
On the basis of \eqref{ergodic for Dn with ipSIC}, the expression of asymptotic ergodic data rate of $D_n$ with ipSIC for PRIS-ARIS-NOMA at high SNR region, i.e. ${{P_s} \to \infty }$, can be derived as
\begin{small}
\begin{align}\label{asymptotic ergodic rate for Dn with ipSIC}
R_{{D_n},ipSIC}^{ergodic,\infty } = \sum\limits_{p = 1}^P {\sum\limits_{i = 1}^I {\sum\limits_{u = 1}^U {\frac{{{H_p}{H_i}{H_u}x_u^{{a_X}}x_p^{{a_Y}}\ln \left( {1 + \frac{{\beta {a_n}b_X^2b_Y^2x_u^2x_p^2}}{{{{\bar \lambda }_n}{\Omega _{RI}}{x_i}}}} \right)}}{{\ln 2\Gamma ({a_X} + 1)\Gamma ({a_Y} + 1)}}} } }.
\end{align}
\end{small}
\begin{remark} \label{remark4}
By substituting \eqref{asymptotic ergodic rate for Dn with ipSIC} into \eqref{ergodic slope}, a zero ergodic data rate slope for $D_n$ with ipSIC can be derived, which means that the residual interference during the ipSIC process limits the user's performance and causes the ergodic data rate to converge to a constant value when SNR increases.
\end{remark}
\subsubsection{Slope analysis of $D_n$ with pSIC}
With the help of Jensen's inequality written as $E\left[ {{{\log }_2}\left( {1 + {\gamma _{{D_n}}}} \right)} \right] \leqslant {\log _2}\left[ {1 + E\left( {{\gamma _{{D_n}}}} \right)} \right]$, we can obtain the upper bound of ergodic data rate of $D_n$ with pSIC for PRIS-ARIS-NOMA when ${{P_s} \to \infty }$ as
\begin{align}\label{upper bound for Dn with pSIC}
  R_{{D_n},pSIC}^{ergodic,up} &= {\log _2}\left[ {1 + \mathbb{E}\left( {\frac{{\beta {P_s^{a}}{{\left| {{\mathbf{h}}_2^T{{\mathbf{\Phi }}_p}{{\mathbf{h}}_1}} \right|}^2}{{\left| {{\mathbf{g}}_n^T{{\mathbf{\Phi }}_a}{{\mathbf{g}}_1}} \right|}^2}{a_n}}}{{\beta \sigma _a^2{{\left\| {{\mathbf{g}}_n^T{{\mathbf{\Phi }}_a}} \right\|}^2} + {\sigma ^2}}}} \right)} \right] \hfill \nonumber \\
   &= {\log _2}\left[ {1 + \frac{{{\vartheta _n}\beta {P_s^{a}}{a_n}{{\left( {\overline {{\lambda _n}} } \right)}^{ - 1}}}}{{\beta \sigma _a^2M{\Omega _{na}} + {\sigma ^2}}}} \right],
\end{align}
where ${\vartheta _n} = \left[ {{{\left( {N{e_\varphi }} \right)}^2} + N{d_\varphi }} \right]\left[ {{{\left( {M{e_\varphi }} \right)}^2} + M{d_\varphi }} \right]$, and ${{e_\varphi }}$ and ${{d_\varphi }}$ can be obtained from \eqref{Mean} and \eqref{Variance}.
\begin{remark} \label{remark5}
By substituting \eqref{upper bound for Dn with pSIC} into \eqref{ergodic slope}, we can obtain that the ergodic data rate slope for $D_n$ with pSIC is equal to $1$.
\end{remark}
\subsubsection{Slope analysis of $D_m$}
On the basis of \eqref{ergodic for Dm}, the expression of asymptotic ergodic data rate of $D_m$ for PRIS-ARIS-NOMA in the high SNR region, i.e. ${{P_s^{a}} \to \infty }$, can be derived as
\begin{align}\label{asymptotic ergodic rate for Dm}
  R_{{D_m}}^{ergodic,\infty } =& \sum\limits_{k = 1}^K {\frac{{\pi {a_m}\sqrt {1 - x_k^2} }}{{K\ln 2\left[ {2{a_n} + {a_m}\left( {{x_k}{\text{ + }}1} \right)} \right]}}}  \hfill \nonumber \\
   &\times \left[ {1 - \sum\limits_{u = 1}^U {\frac{{{H_u}x_u^{{a_X}}}}{{\Gamma ({a_X} + 1)\Gamma ({a_Y} + 1)}}} } \right].
\end{align}
\begin{remark} \label{remark6}
By substituting \eqref{asymptotic ergodic rate for Dm} into \eqref{ergodic slope}, we can obtain that the ergodic data rate slope for $D_m$ is equal to zero, which is the same conclusion as that obtained by \textbf{Remark \ref{remark4}}.
\end{remark}
\subsection{Delay-tolerant Transmission}
In delay-tolerant mode, the source can transmit information at any ergodic data rate with an upper bound on it and the signals can fulfill each state of the channel's traversal during this mode \cite{Wireless_informations}.
Therefore, the system throughput for PRIS-ARIS-NOMA under cascaded Rician fading and Nakagami-$m$ fading channels can be expressed as
\begin{align}\label{delay tolerant}
R_\varpi ^{{\text{tolerant}}} = R_{{D_n},\varpi }^{ergodic} + R_{{D_m}}^{ergodic}.
\end{align}
where $R_{Dn,ipSIC}^{ergodic}$, $R_{Dn,pSIC}^{ergodic}$ and $R_{{D_m}}^{ergodic}$ can respectively be obtained from \eqref{ergodic for Dn with ipSIC}, \eqref{ergodic for Dn with pSIC} and \eqref{ergodic for Dm}.
\begin{table}[!h]
\caption{The fixed numerical values of the parameters.}
\begin{center}
{\tabcolsep12pt\begin{tabular}{|l|l|}\hline   
\cline{1-2}
Monte Carlo simulation repeated & $10^6$ iterations \\
\cline{1-2}
Rician factor & $\kappa {\text{ =  - 5dB}}$ \\
\cline{1-2}
Shaping parameter & ${m_{na}} = 1$ \\
\cline{1-2}
Amplification factor & $\beta {\text{ = 2}}{\text{.5}}$ \\
\cline{1-2}
Number of reflecting elements & $M = N = 3$ \\
\cline{1-2}
Communication link distance & $\begin{gathered}
                               {d_{h1}} = {d_{h2}} = 10{\text{ m}} \hfill \\
                               {d_{g1}} = {d_{gn}} = 20{\text{ m}} \hfill \\
                               {d_{gm}} = 80{\text{ m}} \hfill \\
                               \end{gathered}  $  \\
\cline{1-2}
Two users'power allocations  & $\begin{gathered}
                                    {a_n} = 0.2 \hfill \\
                                    {a_m} = 0.8 \hfill \\
                                    \end{gathered} $   \\
\cline{1-2}
Two users'target rates       & $\begin{gathered}
                                {R_n} = 2{\text{ BPCU}} \hfill \\
                                {R_m} = 2{\text{ BPCU}} \hfill \\
                                \end{gathered}  $ \\
\cline{1-2}
Noise power & $\begin{gathered}
               \sigma _a^2 =  - 80{\text{ dbm}} \hfill \\
              {\sigma ^2} =  - 70{\text{ dbm}} \hfill \\
               \end{gathered} $  \\
\cline{1-2}
Path loss factors & $\begin{gathered}
                     \alpha {\text{ = 2}} \hfill \\
                     \eta {\text{ =  - 10 dbm}} \hfill \\
                     \end{gathered} $ \\
\hline
\end{tabular}}{}
\label{tab1}
\end{center}
\end{table}
\section{Simulation And Numerical Results}\label{Simulation and numerical results}
In this section, simulations are provided to verify the accurateness of equations derived form the above sections.
The fixed numerical values of the parameters are indicated in TABLE I, and BPCU is the abbreviation of bit per channel use.
To show the enhancement of PRIS-ARIS-NOMA, PRIS-ARIS-OMA and double PRISs-NOMA are presented as benchmark.
We have borrowed the simulation approach from \cite{You_mul} and \cite{Yuanwei_NOMA_RIS}, and combined it with the model in this paper, and the validation results of the numerical part are similar to those of these two papers, thus verifying the feasibility of communication in this scenario.
For OMA scheme, we assume the transmission of users information is orthogonal in the frequency domain, and each of the two users occupies half of the frequency domain resource.
To ensure fairness, the total power consumption of each system is meant to be the same.
Specifically, the total power consumption of PRIS-ARIS-NOMA and double PRISs-NOMA are respectively presented as $Q_{total}^{active} = P_s^{a} + {P_{aris}} + \left( {M + N} \right){P_{sw}} + M{P_{dc}}$ and $Q_{total}^{passive} = P_s^p + \left( {M + N} \right){P_{sw}}$, where $P_s^p$ means the transmitting power of the base station for double PRISs-NOMA, ${P_{aris}}$ means the output signal power of ARIS while ${P_{sw}}$ and ${P_{dc}}$ respectively represent the power consumption of the phase shift switch and control circuit in each reflecting element and the DC bias power of each reflecting element on ARIS \cite{Pancunhua}.
Then it can be defined as ${P_{to}} = Q_{total}^{active} = Q_{total}^{passive}$.
The complexity-accuracy trade-off parameters are set to be $P = I = U = 500$, $K = 100$.
For simulation parameter settings, it can be referred from \cite{Yue_full_half_NOMA}, \cite{yue2021RISNOMA}, and \cite{STARS_Yue}.
\subsection{Outage Probability}
Fig. 2 depicts the outage probability of PRIS-ARIS-NOMA versus the transmitting power of the BS, and also compares the outage probability of $D_n$ with different residual interference power under the ipSIC scheme and the outage probability of the two users under OMA networks.
As shown in the figure, the curves of outage probability for $D_n$ with ipSIC/pSIC and $D_m$ can be plotted in terms of \eqref{OP for Dn with ipSIC}, \eqref{OP for Dn with pSIC} and \eqref{OP for Dm}, while the curves of outage probability for user $D_n$ and user $D_m$ under OMA networks can be plotted in terms of \eqref{OP for OMAbenchmark}.
And the curves for asymptotic outage probability can be plotted in terms of \eqref{asymptotic outage probability of Dn with ipSIC}, \eqref{asymptotic outage probability of Dn with pSIC} and \eqref{asymptotic outage probability of Dm}.
It can be observed from the figure that the theoretical value curve is highly coincident with the simulation, thus verifying the correctness and applicability of the theoretical derivation.
For the user's outage performance, it can be read from the picture that the NOMA networks outperform the OMA networks and reflect the fact that NOMA can provide better user fairness than OMA when there are channel differences between users \cite{SIC}.
Consistent with \textbf{Remark \ref{remark1}}, the outage probability of $D_n$ with ipSIC eventually gets to an error floor for the impact of the residual interference, and $D_n$ with ipSIC scheme will get a better outage performance as the residual interference power decreases.
Hence, it is crucial to take the residual interference into consideration when it comes to a practical communication scenario.
\begin{figure}[t!]
    \begin{center}
        \includegraphics[width=3.2in,  height=2.5in]{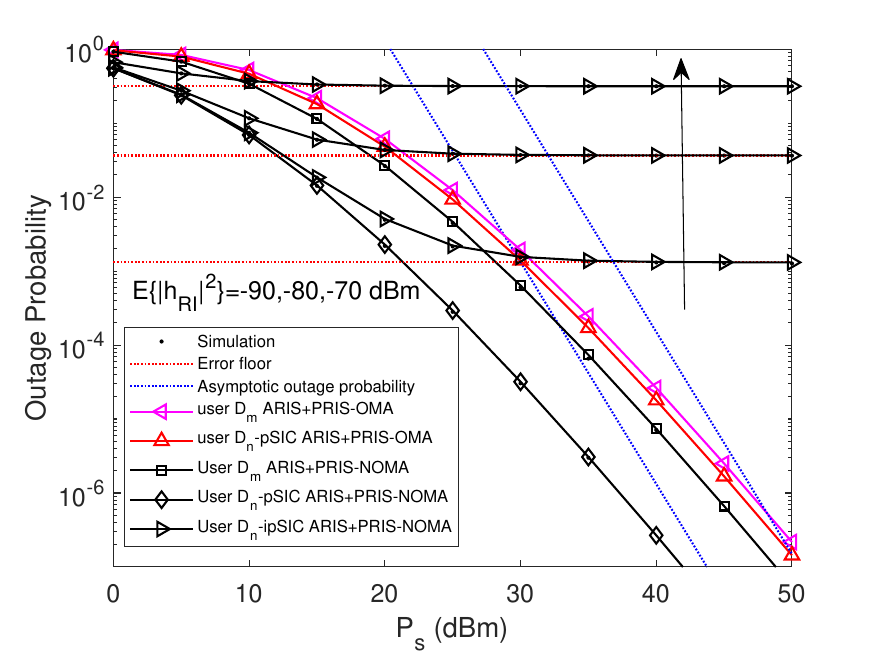}
        \caption*{Fig. 2: Outage probability versus the transmitting power of the BS for PRIS-ARIS-NOMA.}
        \label{P2}
    \end{center}
\end{figure}

Fig. 3 depicts the outage probability for the comparison of PRIS-ARIS-NOMA and double PRISs-NOMA versus the total power consumption of the networks with settings of $\Omega_{RI}=-80$ dBm.
In this case, the outage probability of users without RISs assistance scenario is also plotted in the figure, and it can be seen that the user outage performance of the communication link without RISs assistance is poor in this communication environment.
As can be observed from the figure that the outage performance of PRIS-ARIS networks is better than the double PRISs networks, the cause of this phenomenon is that ARIS is equipped with active bias circuitry compared to PRIS and has a built-in signal amplifier to increase the signal power, which helps improve the user's reception SNR at the receiving end.
It also reflects that the use of ARIS can better combat fading in the channel with the comparison to PRIS.
\begin{figure}[t!]
    \begin{center}
        \includegraphics[width=3.2in,  height=2.5in]{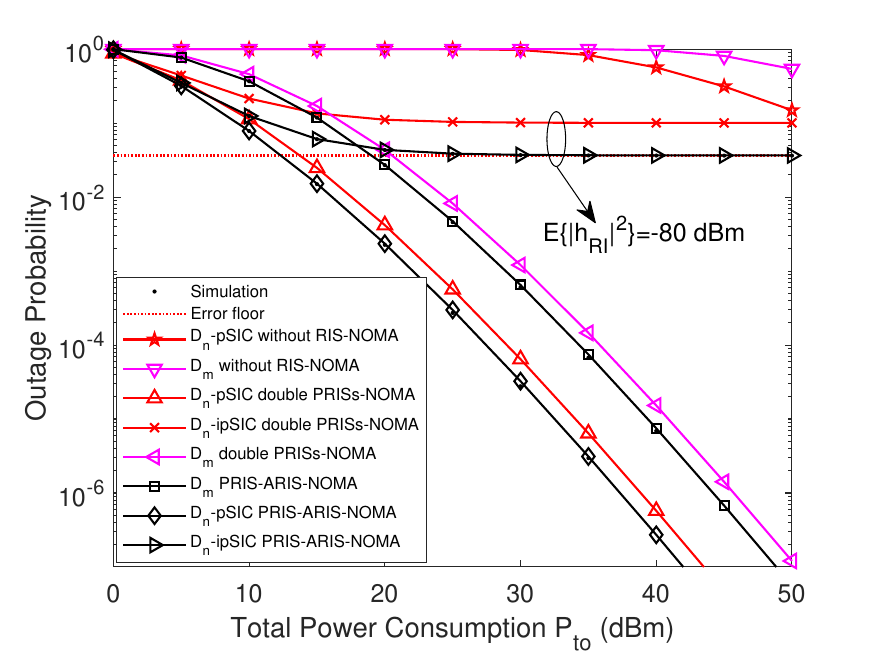}
        \caption*{Fig. 3: Outage probability versus the total power consumption.}
        \label{P3}
    \end{center}
\end{figure}
\begin{figure}[t!]
    \begin{center}
        \includegraphics[width=3.2in,  height=2.5in]{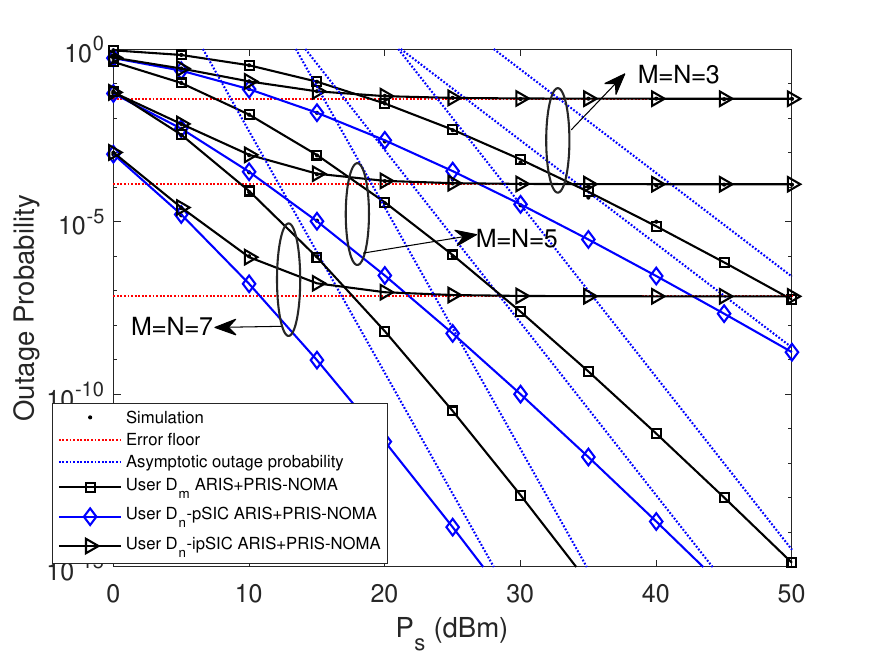}
        \caption*{Fig. 4: Outage probability versus the transmitting power of the BS for PRIS-ARIS-NOMA.}
        \label{P4}
    \end{center}
\end{figure}

Fig. 4 depicts the outage probability of PRIS-ARIS-NOMA versus the transmitting power of the BS with settings of $\Omega_{RI}=-80$ dBm, and compares users outage probability with different reflecting elements.
From the trend of outage probability in the figure, we can find that as the number of RISs reflecting elements increases, the users will tend to get a better outage performance.
This is because that the number of reflecting elements determines the number of independent divisions of the transmission, and as the number of independent divisions increases, the ability of the signal to fight against channel fading increases by a certain degree, so the users outage performance becomes better.
Meanwhile, in line with \textbf{Remark \ref{remark2}} and \textbf{Remark \ref{remark3}}, the diversity orders of $D_n$ and $D_m$ are affected by the reflecting elements on RISs. Therefore, as the number of reflecting elements $M$ increases, the users outage probability curves gain a larger slope and decrease faster.
\begin{figure}[t!]
    \begin{center}
        \includegraphics[width=3.2in,  height=2.5in]{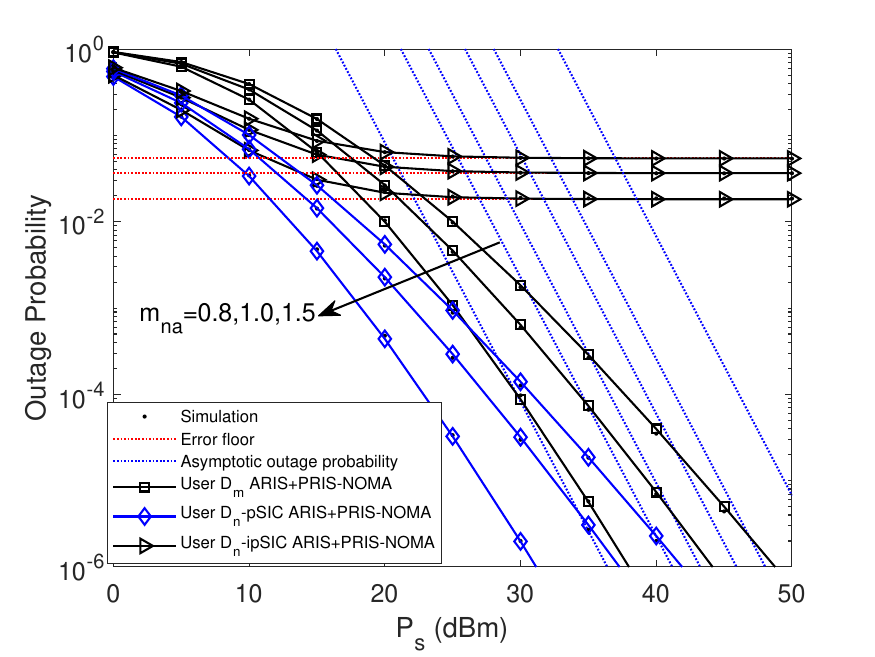}
        \caption*{Fig. 5: Outage probability versus the transmitting power of the BS for PRIS-ARIS-NOMA.}
        \label{P5}
    \end{center}
\end{figure}

Fig. 5 depicts the outage probability of PRIS-ARIS-NOMA versus the transmitting power of the BS with settings of $\Omega_{RI}=-80$ dBm, and compares users outage probability with different shaping parameter $m_{na}$ of Nakagami-$m$ random variables.
From the figure, it can be intuitively observed that the outage performance of users gradually becomes better as $m_{na}$ increases. This is because the physical meaning of $m_{na}$ in the communication scenario represents the depth of channel fading, with larger $m_{na}$ indicating shallower channel fading and higher communication quality of the communication channels.
Theoretically, when $m_{na}$ tends to 0, it means communication is almost impossible to achieve, and when $m_{na}$ tends to infinity, it means no fading exists in the communication channels.
It can also be observed from the figure that the variation of $m_{na}$ does not affect the slope of the users outage probability, which is due to the fact that the user's diversity order depends only on the reflecting elements number of ARIS, which coincides with the \textbf{Remark \ref{remark2}} and \textbf{Remark \ref{remark3}}.

\begin{figure}[t!]
    \begin{center}
        \includegraphics[width=3.2in,  height=2.5in]{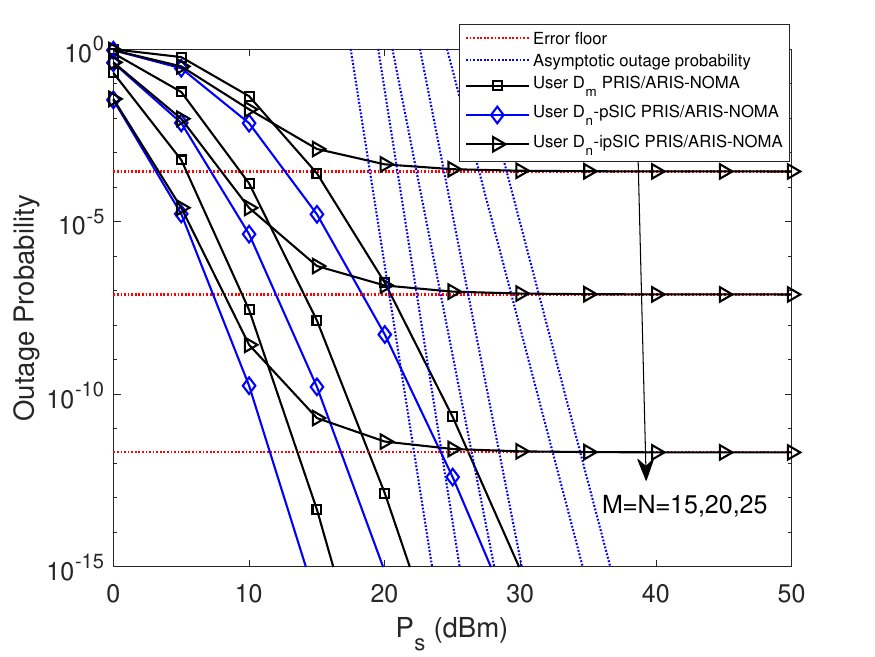}
        \caption*{Fig. 6: Outage probability versus the transmitting power of the BS for PRIS-ARIS-NOMA over long-distance transmission scenario.}
        \label{P6}
    \end{center}
\end{figure}

In order to demonstrate the advantages of this system model for long-distance transmission, we also analyzed a scenario of PRIS-ARIS-NOMA networks for long-distance transmission as shown in Fig. 6.
In this scenario, the distance between the communication links is set as $d_{h1}=d_{h2}=d_{g1}= 1000{\text{ m}}$, $d_{gn} = 800{\text{ m}}$, $d_{gm} = 1800{\text{ m}}$ respectively, and the number of reflecting elements used by the two RISs grows moderately as well.
From the figure, it can be seen that the PRIS-ARIS-NOMA networks has a significant enhancement of the communication performance for long-distance transmission, and the users outage probabilities can reach a threshold of $10^{-3}$ at a transmitting power around 5 dBm.
Also as the number of reflecting elements increases, the outage probability decreases more rapidly, which confirms the insights obtained from the \textbf{Remark \ref{remark2}} and \textbf{Remark \ref{remark3}}.
\subsection{Ergodic Data Rate}
Fig. 7 depicts the ergodic data rate of PRIS-ARIS-NOMA versus the transmitting power of the BS with settings of $\Omega_{RI}=-80$ dBm, and also compares the ergodic data rate of $D_n$ and $D_m$ under OMA networks.
As shown in the figure, the curves of ergodic data rate for $D_n$ with ipSIC/pSIC and $D_m$ can be plotted in terms of \eqref{ergodic for Dn with ipSIC}, \eqref{ergodic for Dn with pSIC} and \eqref{ergodic for Dm}, while the curves of ergodic rate for user $D_n$ and user $D_m$ under OMA networks can be plotted in terms of \eqref{ergodic for OMA}.
And the curves for asymptotic ergodic data rate can be plotted in terms of \eqref{asymptotic ergodic rate for Dn with ipSIC}, \eqref{upper bound for Dn with pSIC} and \eqref{asymptotic ergodic rate for Dm}.
The figure shows that the ergodic data rate of $D_n$ with ipSIC and $D_m$ eventually converge to an upper throughput limit and converge to a zero ergodic data rate slope at high SNR region, which are in lines with the \textbf{Remark \ref{remark4}} and \textbf{Remark \ref{remark6}}.
This is because for $D_n$ with ipSIC, it is similar to the analysis of the outage probability, self interference affects the users receiving performance more as the transmitting power increases. And for $D_m$, upper throughput limit is only dependent on the power allocation factors of the two users according to \eqref{asymptotic ergodic rate for Dm}.
Moreover, the ergodic performance of $D_n$ with ipSIC under NOMA networks is better than that of the OMA networks.
For OMA networks, the division of resources for per user in the frequency domain is reduced by one-half compared to NOMA systems, so the ergodic data rate slope of the OMA networks is reduced by one-half compared to NOMA networks, which is the reason why the ergodic data rate under OMA networks grows slower than that of the NOMA networks.
\begin{figure}[t!]
    \begin{center}
        \includegraphics[width=3.2in,  height=2.5in]{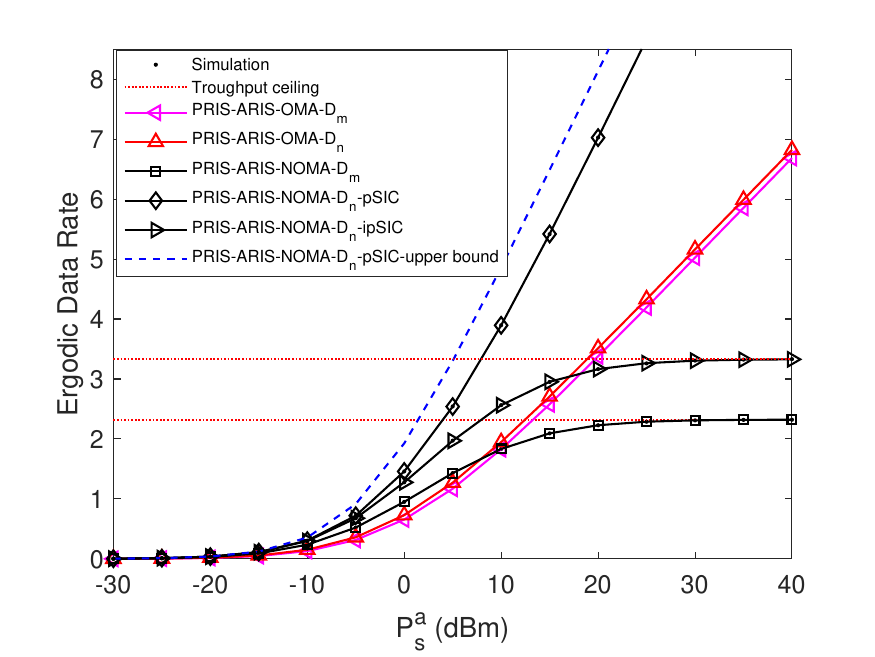}
        \caption*{Fig. 7: Ergodic data rate versus the transmitting power of the BS for PRIS-ARIS-NOMA.}
        \label{P7}
    \end{center}
\end{figure}
\begin{figure}[t!]
    \begin{center}
        \includegraphics[width=3.2in,  height=2.5in]{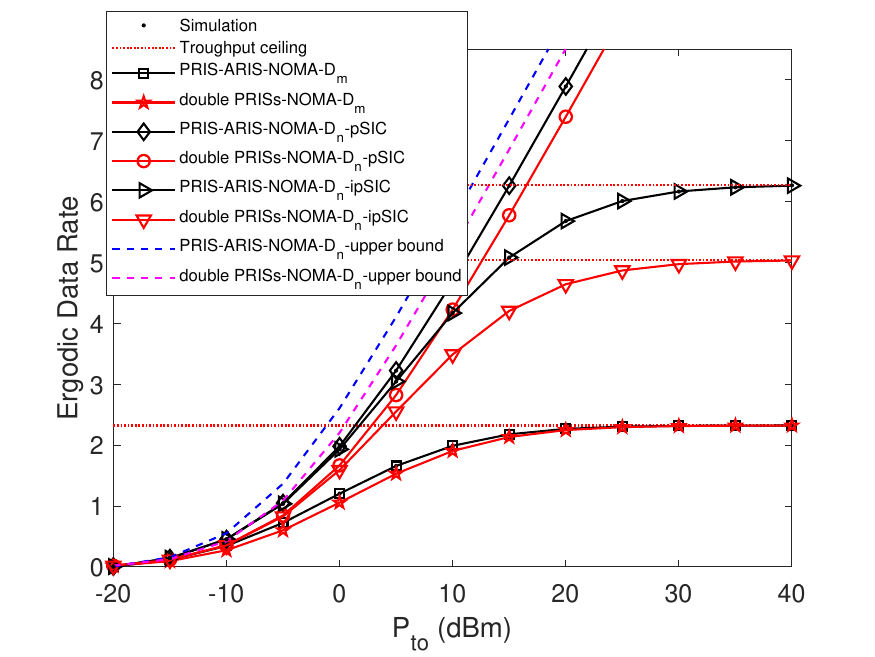}
        \caption*{Fig. 8: Ergodic data rate versus the total power consumption.}
        \label{P8}
    \end{center}
\end{figure}

Fig. 8 depicts the ergodic data rate for the comparison of PRIS-ARIS-NOMA and double PRISs-NOMA versus the total power consumption of the networks with settings of $\Omega_{RI}=-80$ dBm.
One can be seen is that the ergodic data rates of $D_n$ with ipSIC/pSIC and $D_m$ in PRIS-ARIS-NOMA are superior to that in double PRISs-NOMA, this is because that ARIS amplifies and forwards the signals through the built-in bias circuit, which can increase the SINR at the receiving end, so the ergodic data rate will be higher.
Fig. 9 depicts the ergodic data rate for PRIS-ARIS-NOMA versus the transmitting power of the BS with settings of $\Omega_{RI}=-80$ dBm, and compares the ergodic data rate of the two users under different number of reflecting elements of the ARIS. From the figure we can observe that as the number of ARIS reflecting elements increases, the ergodic data rate gradually increases.
\begin{figure}[t!]
    \begin{center}
        \includegraphics[width=3.2in,  height=2.5in]{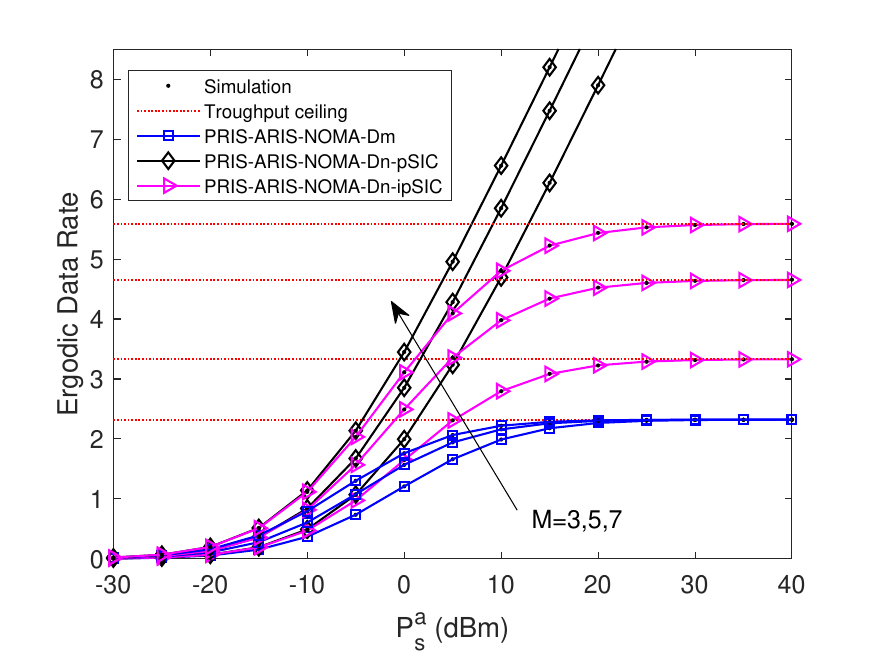}
        \caption*{Fig. 9: Ergodic data rate versus the transmitting power of the BS for PRIS-ARIS-NOMA.}
        \label{P9}
    \end{center}
\end{figure}
\subsection{System Throughput}
\begin{figure}[t!]
    \begin{center}
        \includegraphics[width=3.2in,  height=2.5in]{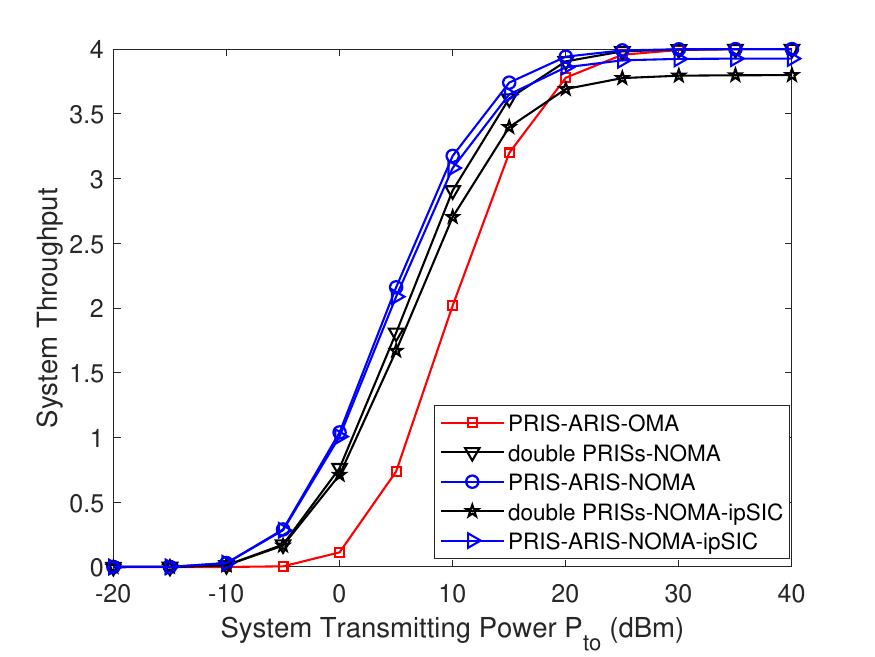}
        \caption*{Fig. 10: System throughput versus the total power consumption.}
        \label{P10}
    \end{center}
\end{figure}
Fig. 10 depicts the system throughput for the comparison of PRIS-ARIS-NOMA and double PRISs-NOMA versus the total power consumption of the networks in delay-limited transmission mode with settings of $\Omega_{RI}=-80$ dBm.
The curves for the system throughput in delay-limited transmission mode can be drawn in terms of \eqref{delay limited}.
It can be observed from the figure that the system throughput of the PRIS-ARIS-NOMA in delay-limited transmission mode is higher than that of the double PRISs networks and OMA networks.
This is due to the fact that the system throughput in delay-limited transmission mode is relevant to the system outage probability and a lower outage probability yields a higher throughput.
Another phenomenon is that the system throughput with ipSIC scheme cannot reach the maximum value.
This is because residual interference limits the extent to which the outage probability decreases with increasing transmitting power, thus producing an upper limit on the system throughput that does not reach the expected maximum value.
\begin{figure}[t!]
    \begin{center}
        \includegraphics[width=3.2in,  height=2.5in]{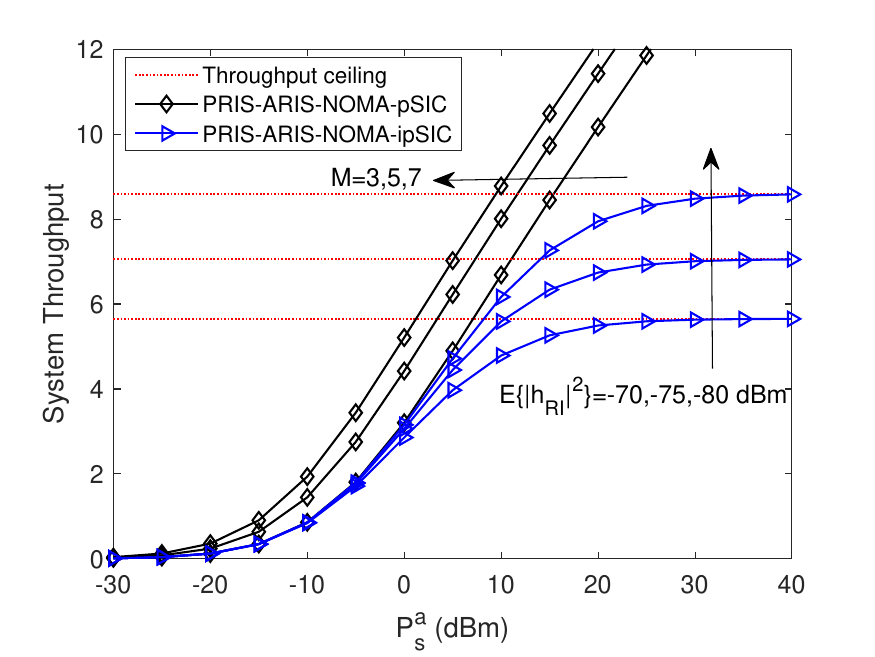}
        \caption*{Fig. 11: System throughput versus the transmitting power of the BS for PRIS-ARIS-NOMA.}
        \label{P11}
    \end{center}
\end{figure}
Fig. 11 depicts the the system throughput for PRIS-ARIS-NOMA versus the transmitting power of the BS in delay-tolerant transmission mode with settings of $\Omega_{RI}=-80$ dBm.
The curves for the system throughput in delay-tolerant transmission mode can be drawn in terms of \eqref{delay tolerant}.
From the figure, we can observe that the throughput of the system can be improved by increasing the number of RIS reflecting elements and reducing the power of self interference under the ipSIC scheme.
It also reflects the importance of optimizing the SIC process in NOMA networks.
\section{Conclusion}\label{conclusion}
In this paper, the PRIS-ARIS assisted NOMA networks under cascaded Rician fading and Nakagami-$m$ fading channels have been studied in detail.
Specifically, the closed-form expressions for outage probability and ergodic data rate of a pair of two non-orthogonal users have been derived in detail, as well as the asymptotic expressions for outage probability and ergodic data rate are deduced for more insights.
Moreover, diversity orders and ergodic data rate slopes are analyzed based on the asymptotic expression to evaluate the performance of the networks.
Furthermore, the system throughput in delay-limited mode and delay-tolerant mode are evaluated to show the system enhancement.
Eventually, through detailed study of this paper, we can find that PRIS-ARIS-NOMA outperforms PRIS-ARIS-OMA and double PRISs-NOMA.
Compared to PRIS, the use of ARIS at user end can better combat channel fading under the cascaded channel scenario.
And as the number of RISs reflecting elements increases within a certain range, the networks tend to perform better.

\section*{Appendix~A: Proof of Theorem \ref{theorem:1}} \label{Appendix:A}
\renewcommand{\theequation}{A.\arabic{equation}}
\setcounter{equation}{0}
To prove this theorem, we started by bringing \eqref{SINRND} and \eqref{SINRD} into \eqref{outage event Dn}.
The outage probability expression of $D_n$ with ipSIC can be further derived as \eqref{Appendix A 1} at the top of the next page. Then, simplifying \eqref{Appendix A 1} yields
\begin{figure*}[!t]
\normalsize
\begin{small}
\begin{align}\label{Appendix A 1}
  P_{{D_n}}^{ipSIC}=& \Pr \left( {\frac{{\beta {P_s^{a}}{{\left| {{\mathbf{h}}_2^T{{\mathbf{\Phi }}_p}{{\mathbf{h}}_1}} \right|}^2}{{\left| {{\mathbf{g}}_n^T{{\mathbf{\Phi }}_a}{{\mathbf{g}}_1}} \right|}^2}{a_m}}}{{\beta {P_s^{a}}{{\left| {{\mathbf{h}}_2^T{{\mathbf{\Phi }}_p}{{\mathbf{h}}_1}} \right|}^2}{{\left| {{\mathbf{g}}_n^T{{\mathbf{\Phi }}_a}{{\mathbf{g}}_1}} \right|}^2}{a_n} + \beta \sigma _a^2{{\left\| {{\mathbf{g}}_n^T{{\mathbf{\Phi }}_a}} \right\|}^2} + {\sigma ^2}}} < {\gamma _{t{h_m}}}} \right) \hfill \nonumber \\
   &+ \Pr \left( {\frac{{\beta {P_s^{a}}{{\left| {{\mathbf{h}}_2^T{{\mathbf{\Phi }}_p}{{\mathbf{h}}_1}} \right|}^2}{{\left| {{\mathbf{g}}_n^T{{\mathbf{\Phi }}_a}{{\mathbf{g}}_1}} \right|}^2}{a_m}}}{{\beta {P_s^{a}}{{\left| {{\mathbf{h}}_2^T{{\mathbf{\Phi }}_p}{{\mathbf{h}}_1}} \right|}^2}{{\left| {{\mathbf{g}}_n^T{{\mathbf{\Phi }}_a}{{\mathbf{g}}_1}} \right|}^2}{a_n} + \beta \sigma _a^2{{\left\| {{\mathbf{g}}_n^T{{\mathbf{\Phi }}_a}} \right\|}^2} + {\sigma ^2}}} > {\gamma _{t{h_m}}},\frac{{\beta {P_s^{a}}{{\left| {{\mathbf{h}}_2^T{{\mathbf{\Phi }}_p}{{\mathbf{h}}_1}} \right|}^2}{{\left| {{\mathbf{g}}_n^T{{\mathbf{\Phi }}_a}{{\mathbf{g}}_1}} \right|}^2}{a_n}}}{{\varpi {P_s^{a}}{{\left| {{h_{RI}}} \right|}^2} + \beta \sigma _a^2{{\left\| {{\mathbf{g}}_n^T{{\mathbf{\Phi }}_a}} \right\|}^2} + {\sigma ^2}}} < {\gamma _{t{h_n}}}} \right).
\end{align}
\end{small}
\hrulefill \vspace*{0pt}
\end{figure*}
\begin{small}
\begin{align}\label{OP for Dn with ipSIC step2}
  P_{{D_n}}^{out,ipSIC} &= \Pr \left[ {\underbrace {{{\left| {\sum\limits_{n = 0}^N {h_n^1} h_n^2} \right|}^2}}_{{\zeta _h}}\underbrace {{{\left| {\sum\limits_{m = 0}^M {g_m^1} g_m^\varphi } \right|}^2}}_{{\zeta _g}}} \right. \hfill \nonumber \\
  &\left. { < \frac{{\overline {{\lambda _n}} {\gamma _{t{h_n}}}\left( {\varpi {P_s^{a}}{{\left| {{h_{RI}}} \right|}^2} + \beta \sigma _a^2{{\left\| {{\mathbf{g}}_n^T{{\mathbf{\Phi }}_a}} \right\|}^2} + {\sigma ^2}} \right)}}{{\beta {P_s^{a}}{a_n}}}} \right],
\end{align}
\end{small}where, $\overline {{\lambda _n}}  = {\eta ^{ - 4}}d_{h1}^\alpha d_{h2}^\alpha d_{g1}^\alpha d_{gn}^\alpha $, and the PDF of $\zeta _h$ and $\zeta _g$ are denoted by \eqref{zetahpdf} and \eqref{zetagpdf} in Section II. The operation here needs to obtain the PDF of ${\zeta _h}{\zeta _g}$, and combine \eqref{zetahpdf} and \eqref{zetagpdf}, the PDF of $Z = {\zeta _h}{\zeta _g}$ can be derived by
\begin{align}\label{OP for Dn with ipSIC step3}
  {F_Z}\left( z \right) &= \int_0^\infty  {\left[ {\int_0^{\frac{z}{x}} {{f_{{\zeta _g}}}\left( y \right)} dy} \right]} {f_{{\zeta _h}}}\left( x \right)dx.
\end{align}

Performing a mathematical simplification and using Gauss-Laguerre quadrature formula \cite[Sec. 2.2.2]{Stochastic_Methods} to approximate the result of the integral expression yields
\begin{align}\label{OP for Dn with ipSIC step4}
{F_Z}\left( z \right) = \sum\limits_{u = 1}^U {\frac{{{H_u}x_u^{{a_X}}}}{{\Gamma ({a_X} + 1)\Gamma ({a_Y} + 1)}}\gamma \left( {{a_Y} + 1,\frac{{\sqrt z }}{{{b_X}{b_Y}{x_u}}}} \right)},
\end{align}
where $\gamma (\alpha ,x) = \int_0^x {{e^{ - t}}{t^{\alpha  - 1}}} dt$ means the lower incomplete gamma function\cite[Eq. (8.350.1)]{table}. ${{H_u}}$ and $x_u$ are the $u$-th weight and $u$-th zero point of Laguerre polynomial ${L_u}({x_u})$, respectively. Specifically, ${H_u} = \frac{{{{[(U + 1)!]}^2}}}{{{x_u}{{[{{L'}_{U + 1}}({x_u})]}^2}}},u = 0,1,...,U$ and ${L_\varphi}\left( x \right) = {e^x}\frac{{{d^\varphi}}}{{d{x^\varphi}}}\left( {{x^\varphi}{e^{ - x}}} \right)$.

In \eqref{OP for Dn with ipSIC step2}, the self interference part ${{{\left| {{h_{RI}}} \right|}^2}}$ follows an exponential distribution with parameter ${\Omega _{RI}}$.
Then, by substituting \eqref{OP for Dn with ipSIC step4} into \eqref{OP for Dn with ipSIC step2} and approximating thermal noise part ${{\left\| {{\mathbf{g}}_n^T{{\mathbf{\Phi }}_a}} \right\|}^2}$ to ${M{\Omega _{na}}}$, the expression of outage probability for $D_n$ with ipSIC can be derived by
\begin{align}\label{OP for Dn with ipSIC step5}
  P_{{D_n}}^{out,ipSIC} &= \int_0^{ + \infty } {\frac{1}{{{\Omega _{RI}}}}{e^{ - \frac{x}{{{\Omega _{RI}}}}}}}  \hfill \nonumber \\
   &\times {F_Z}\left[ {\frac{{\overline {{\lambda _n}} {\gamma _{t{h_n}}}\left( {\varpi {P_s^{a}}x + \beta \sigma _a^2M{\Omega _{na}} + {\sigma ^2}} \right)}}{{\beta {P_s^{a}}{a_n}}}} \right]dx.
\end{align}

Finally, by once again using the Gauss-Laguerre quadrature formula in \eqref{OP for Dn with ipSIC step5} and performing algebraic operations, \eqref{OP for Dn with ipSIC} can be obtained. The proof is completed.
\section*{Appendix~B: Proof of Corollary \ref{corollary3}} \label{Appendix:B}
\renewcommand{\theequation}{B.\arabic{equation}}
\setcounter{equation}{0}
Proof starts with substituting $\varpi  = 0$ into \eqref{OP for Dn with ipSIC step2} and the outage probability of $D_n$ with pSIC can be expressed as
\begin{small}
\begin{align}\label{OP for Dn with pSIC step1}
  P_{{D_n}}^{out,pSIC} =& \Pr \left[ {\underbrace {\left( {\sum\limits_{n = 0}^N {\left| {h_n^1h_n^2} \right|} } \right)}_{{{\hat \zeta }_h}}\underbrace {\left( {\sum\limits_{m = 0}^M {\left| {g_m^1g_m^\varphi } \right|} } \right)}_{{{\hat \zeta }_g}}} \right. \hfill \nonumber \\
  &\left. { < \sqrt {\frac{{\overline {{\lambda _n}} {\gamma _{t{h_n}}}\left( {\beta \sigma _a^2{{\left\| {{\mathbf{g}}_n^T{{\mathbf{\Phi }}_a}} \right\|}^2} + {\sigma ^2}} \right)}}{{\beta {P_s^{a}}{a_n}}}} } \right].
\end{align}
\end{small}

Then to derive the expression of the asymptotic outage probability, the next step is to obtain the approximate CDF expression of $\hat Z = {{\hat \zeta }_h}{{\hat \zeta }_g}$ at high SNR region.
Using the Laplace transform on \eqref{Single Cascade Channel PDF} with the help of the integral formula \cite[Sec. 6.621.3]{Stochastic_Methods} yields
\begin{small}
\begin{align}\label{OP for Dn with pSIC step2}
  \mathcal{L}&\left[ {{f_{{X_n}}}\left( y \right)} \right]\left( s \right) = \sum\limits_{m = 0}^\infty  {\frac{{4{\kappa ^m}{{\left[ {\left( {\kappa  + 1} \right){m_{na}}} \right]}^{\frac{1}{2}\left( {1 + m + {m_{na}}} \right)}}}}{{{e^\kappa }{{\left( {m!} \right)}^2}\Gamma \left( {{m_{na}}} \right)}}}  \hfill \nonumber \\
   &\times \frac{{\sqrt \pi  {{\left( {4\sqrt {\left( {\kappa  + 1} \right){m_{na}}} } \right)}^{m - {m_{na}} + 1}}}}{{{{\left( {s + 2\sqrt {\left( {\kappa  + 1} \right){m_{na}}} } \right)}^{2m + 2}}}}\frac{{\Gamma \left( {2m + 2} \right)\Gamma \left( {2{m_{na}}} \right)}}{{\Gamma \left( {m + {m_{na}} + \frac{3}{2}} \right)}} \hfill \nonumber \\
   &\times F\left( {2m + 2,m - {m_{na}} + \frac{3}{2};m + {m_{na}} + \frac{3}{2};{\Upsilon _s}} \right),
\end{align}
\end{small}where ${\Upsilon _s} = \frac{{s - 2\sqrt {\left( {\kappa  + 1} \right){m_{na}}} }}{{s + 2\sqrt {\left( {\kappa  + 1} \right){m_{na}}} }}$.
When ${{P_s} \to \infty }$ in the time domain, $s$ tends to infinity in the complex frequency domain simultaneously.
Therefore, $\frac{{s - 2\sqrt {\left( {\kappa  + 1} \right){m_{na}}} }}{{s + 2\sqrt {\left( {\kappa  + 1} \right){m_{na}}} }}$ can be approximated to 1 and $s + 2\sqrt {\left( {\kappa  + 1} \right){m_{na}}}$ can be approximated as $s$.
Letting $\Delta \left( m \right)$ as
\begin{small}
\begin{align}\label{detam}
  \Delta \left( m \right) &= \frac{{{\kappa ^m}{{\left[ {\left( {\kappa  + 1} \right){m_{na}}} \right]}^{\frac{1}{2}\left( {1 + m + {m_{na}}} \right)}}}}{{{{\left( {m!} \right)}^2}\Gamma \left( {m + {m_{na}} + \frac{3}{2}} \right)}} \hfill \nonumber \\
   &\times {\left( {4\sqrt {\left( {\kappa  + 1} \right){m_{na}}} } \right)^{m - {m_{na}} + 1}}\Gamma \left( {2m + 2} \right) \hfill \nonumber \\
   &\times F\left( {2m + 2,m - {m_{na}} + \frac{3}{2};m + {m_{na}} + \frac{3}{2};1} \right),
\end{align}
\end{small}
the Laplace transform of ${X_n} = \left| {h_n^1h_n^2} \right|$ can be eventually derived as
\begin{align}\label{OP for Dn with pSIC step3}
\mathcal{L}\left[ {{f_{{X_n}}}\left( y \right)} \right]\left( s \right) = \sum\limits_{m = 0}^\infty  {\frac{{4\sqrt \pi  \Gamma \left( {2{m_{na}}} \right)\Delta \left( m \right)}}{{{e^\kappa }\Gamma \left( {{m_{na}}} \right)}}} \frac{1}{{{s^{2m + 2}}}}.
\end{align}

As a further advance, based on \eqref{OP for Dn with pSIC step3} and then using the convolution theorem, the Laplace transform of the PDF of ${{{\hat \zeta }_h}}$ can be given by
\begin{align}\label{OP for Dn with pSIC step4}
\mathcal{L}\left[ {{f_{{{\hat \zeta }_h}}}\left( y \right)} \right]\left( s \right) = {\left[ {\sum\limits_{m = 0}^\infty  {\frac{{4\sqrt \pi  \Gamma \left( {2{m_{na}}} \right)\Delta \left( m \right)}}{{{e^\kappa }\Gamma \left( {{m_{na}}} \right)}}} \frac{1}{{{s^{2m + 2}}}}} \right]^N}.
\end{align}

The first term of the series is taken i.e.,$m=0$, and then appling inverse Laplace transform to \eqref{OP for Dn with pSIC step4}, the PDF of ${{{\hat \zeta }_h}}$ at high SNR region can be derived as
\begin{align}\label{OP for Dn with pSIC step5}
{f_{{{\hat \zeta }_h}}}\left( y \right) = {\left[ {\frac{{4\sqrt \pi  \Gamma \left( {2{m_{na}}} \right)\Delta \left( 0 \right)}}{{{e^\kappa }\Gamma \left( {{m_{na}}} \right)}}} \right]^N}\frac{{{y^{2N - 1}}}}{{\left( {2N - 1} \right)!}}.
\end{align}

Similarly, the derivation of ${{{\hat \zeta }_g}}$ follows the same process as above.
Referring to the process of \eqref{OP for Dn with ipSIC step3} and using the transformation of the Gaussian Chebyshev quadrature formula\cite[Eq. (8.8.12)]{Introduction_to_Numerical_Analysis}, the CDF expression for $\hat Z = {{\hat \zeta }_h}{{\hat \zeta }_g}$ at high SNR region can be obtained as
\begin{small}
\begin{align}\label{OP for Dn with pSIC step7}
  {F_{\hat Z}}\left( z \right) &= \frac{{\pi {\mu _b}{z^{2M}}}}{{2K\left( {2M} \right)!\left( {2N - 1} \right)!}}{\left[ {\frac{{4\sqrt \pi  \Gamma \left( {2{m_{na}}} \right)\Delta \left( 0 \right)}}{{{e^\kappa }\Gamma \left( {{m_{na}}} \right)}}} \right]^{M + N}} \hfill \nonumber \\
   &\times \sum\limits_{k = 1}^K {{{\left[ {\frac{{\left( {{x_k}{\text{ + }}1} \right){\mu _b}}}{2}} \right]}^{2N - 2M - 1}}\sqrt {1 - {x_k}^2} },
\end{align}
\end{small}where $K$ is the parameter of the Gauss-Chebyshev quadrature formula for precision.
$\mu _b$ is the upper limit of integral before using the Gauss-Chebyshev quadrature formula for the approximation and ${\mu _b} \to \infty $. ${x_k} = \cos \left( {\frac{{2k - 1}}{{2K}}\pi } \right)$.

Finally, by submitting \eqref{OP for Dn with pSIC step7} into \eqref{OP for Dn with pSIC step1}, \eqref{asymptotic outage probability of Dn with pSIC} can be obtained.
The proof is completed.
\section*{Appendix~C: Proof of Theorem \ref{theorem:4}} \label{Appendix:C}
\renewcommand{\theequation}{C.\arabic{equation}}
\setcounter{equation}{0}
The proof starts by substituting \eqref{SINRN} into \eqref{ergodic rate} and we can obtain the expression of ergodic data rate for $D_n$ with ipSIC as
\begin{small}
\begin{align}\label{ER for Dn with ipSIC step1}
R_{Dn,ipSIC}^{ergodic} = \mathbb{E}\left[ {{{\log }_2}\left( {1 + \frac{{\beta {P_s^{a}}{{\left| {{\mathbf{h}}_2^T{{\mathbf{\Phi }}_p}{{\mathbf{h}}_1}} \right|}^2}{{\left| {{\mathbf{g}}_n^T{{\mathbf{\Phi }}_a}{{\mathbf{g}}_1}} \right|}^2}{a_n}}}{{\varpi {P_s^{a}}\sigma _{ri}^2 + \beta \sigma _a^2{{\left\| {{\mathbf{g}}_n^T{{\mathbf{\Phi }}_a}} \right\|}^2} + {\sigma ^2}}}} \right)} \right].
\end{align}
\end{small}

By referring to the process of proving the outage probability of $D_n$ with ipSIC in \textbf{Appendix A} and with the use of \eqref{OP for Dn with ipSIC}, we can obtain the CDF of $\gamma _{{D_n}}$ as
\begin{align}\label{ER for Dn with ipSIC step2}
  &{F_{{\gamma _{{D_n}}}}}\left( z \right) = \sum\limits_{i = 1}^I {\sum\limits_{u = 1}^U {\frac{{{H_i}{H_u}x_u^{{a_X}}}}{{\Gamma ({a_X} + 1)\Gamma ({a_Y} + 1)}}} }  \hfill \nonumber \\
   &\times \gamma \left( {{a_Y} + 1,\frac{{\sqrt {z{{\bar \lambda }_n}\left( {{P_s^{a}}{\Omega _{RI}}{x_i} + \beta \sigma _a^2M{\Omega _{na}} + {\sigma ^2}} \right)} }}{{\sqrt {\beta {P_s^{a}}{a_n}} {b_X}{b_Y}{x_u}}}} \right).
\end{align}

With the help of the definition expression for the lower incomplete Gamma function, we can obtain the PDF of $\gamma _{{D_n}}$ on the basis of \eqref{ER for Dn with ipSIC step2} as
\begin{align}\label{ER for Dn with ipSIC step3}
{f_{{\gamma _{{D_n}}}}}\left( z \right) = \sum\limits_{i = 1}^I {\sum\limits_{u = 1}^U {\frac{{{a^{{a_Y} + 1}}{H_i}{H_u}x_u^{{a_X}}{z^{\frac{{{a_Y}}}{2}}}{e^{ - \chi \sqrt z }}}}{{2\sqrt z \Gamma ({a_X} + 1)\Gamma ({a_Y} + 1)}}} },
\end{align}
where $\chi  = \frac{{\sqrt {z{{\bar \lambda }_n}\left( {{P_s}{\Omega _{RI}}{x_i} + \beta \sigma _a^2M{\Omega _{na}} + {\sigma ^2}} \right)} }}{{\sqrt {\beta {P_s}{a_n}} {b_X}{b_Y}{x_u}}}$.

Combining \eqref{ER for Dn with ipSIC step1} and \eqref{ER for Dn with ipSIC step3}, the ergodic data rate of $D_n$ with ipSIC for PRIS-ARIS-NOMA can be given by
\begin{small}
\begin{align}\label{ER for Dn with ipSIC step4}
  &R_n^{ergodic} = \frac{1}{{\ln 2}}\int_0^{ + \infty } {\ln \left( {1 + y} \right){f_{{\gamma _{{D_n}}}}}\left( y \right)} dy \hfill \nonumber \\
   &= \frac{1}{{\ln 2}}\int_0^{ + \infty } {\ln \left( {1 + y} \right)} \sum\limits_{i = 1}^I {\sum\limits_{u = 1}^U {\frac{{{a^{{a_Y} + 1}}{H_i}{H_u}x_u^{{a_X}}{z^{\frac{{{a_Y}}}{2}}}{e^{ - \chi \sqrt z }}}}{{2\sqrt z \Gamma ({a_X} + 1)\Gamma ({a_Y} + 1)}}} } dy.
\end{align}
\end{small}

Finally, by bringing into Gauss-Laguerre quadrature, \eqref{ergodic for Dn with ipSIC} can be obtained.
The proof is completed.
\section*{Appendix~D: Proof of Theorem \ref{theorem:5}} \label{Appendix:D}
\renewcommand{\theequation}{D.\arabic{equation}}
\setcounter{equation}{0}
The proof starts similarly to the \eqref{ER for Dn with ipSIC step1} by substituting \eqref{SINRD} into \eqref{ergodic rate} and we can obtain the expression of ergodic data rate for $D_m$ as
\begin{small}
\begin{align}\label{ER for Dm step1}
  R_{{D_m}}^{ergodic} &= \mathbb{E}\left[ {{{\log }_2}\left( {1 + } \right.} \right. \hfill \nonumber \\
  &\left. {\left. {\frac{{\beta {P_s^{a}}{{\left| {{\mathbf{h}}_2^T{{\mathbf{\Phi }}_p}{{\mathbf{h}}_1}} \right|}^2}{{\left| {{\mathbf{g}}_m^T{{\mathbf{\Phi }}_a}{{\mathbf{g}}_1}} \right|}^2}{a_m}}}{{\beta {P_s^{a}}{{\left| {{\mathbf{h}}_2^T{{\mathbf{\Phi }}_p}{{\mathbf{h}}_1}} \right|}^2}{{\left| {{\mathbf{g}}_m^T{{\mathbf{\Phi }}_a}{{\mathbf{g}}_1}} \right|}^2}{a_n} + \beta \sigma _a^2{{\left\| {{\mathbf{g}}_n^T{{\mathbf{\Phi }}_a}} \right\|}^2} + {\sigma ^2}}}} \right)} \right].
\end{align}
\end{small}

By referring to the process of proving the outage probability of $D_m$ and with the use of \eqref{OP for Dm}, we can obtain the CDF of $\gamma _{{D_m}}$ as
\begin{small}
\begin{align}\label{ER for Dm step2}
  {F_{{{\gamma _{{D_m}}}}}}\left( x \right) =& \sum\limits_{u = 1}^U {\frac{{x_u^{{a_X}}{H_u}}}{{\Gamma ({a_X} + 1)\Gamma ({a_Y} + 1)}}}  \hfill \nonumber \\
   &\times \gamma \left( {{a_Y} + 1,\frac{{\sqrt {\overline {{\lambda _m}} x\left( {\beta \sigma _a^2M{\Omega _{na}} + {\sigma ^2}} \right)} }}{{\sqrt {\beta {P_s^{a}}\left( {{a_m} - x{a_n}} \right)} {b_X}{b_Y}{x_u}}}} \right),
\end{align}
\end{small}
it should be noted that in order to ensure the correctness of the calculation, it is necessary here to satisfy $x < \frac{{{a_m}}}{{{a_n}}}$, which is different from the process of solving for \eqref{ER for Dn with ipSIC step2}.
Combining \eqref{ER for Dm step1} and \eqref{ER for Dm step2}, the ergodic data rate of $D_m$ for PRIS/ARIS-NOMA can be given by
\begin{align}\label{ER for Dm step3}
  R_m^{ergodic} =& \frac{1}{{\ln 2}}\int_0^{\frac{{{a_m}}}{{{a_n}}}} {\frac{{1 - {F_{{\gamma _{{D_m}}}}}\left( x \right)}}{{1 + x}}dx},
\end{align}

Finally, by bringing into the transformation of the Gaussian Chebyshev quadrature formula, \eqref{ergodic for Dm} can be obtained.
The proof is completed.
\bibliographystyle{IEEEtran}
\bibliography{mybib}

\end{document}